\documentclass[11pt]{article}
\usepackage{cite}
\usepackage{subfigure}
\usepackage{graphicx, color, graphpap}
\usepackage{amsmath,amssymb,amsthm}
\usepackage{ifthen}
\usepackage{mathrsfs}
\usepackage{mathtools}
\usepackage{enumitem}
\usepackage{physics}
\usepackage{pstricks}
\usepackage{float}
\usepackage{algorithmic}
\usepackage{fancyhdr}
\usepackage[us,12hr]{datetime} 
\usepackage{exscale}
\usepackage{tabularx}
\usepackage{latexsym}
\usepackage{fullpage}       
\usepackage{float}
\usepackage{verbatim}
\usepackage{multirow}
\usepackage{overpic}

\usepackage{booktabs}
\usepackage{makeidx}
\usepackage{algorithm,algorithmic}
 
\numberwithin{equation}{section}  
\numberwithin{table}{section}
\numberwithin{algorithm}{section}
\makeindex

\usepackage{csvsimple}

\usepackage[pagebackref=true]{hyperref}
\hypersetup{
	plainpages=false,       
	unicode=false,          
	pdftoolbar=true,        
	pdfmenubar=true,        
	pdffitwindow=false,     
	pdfstartview={FitH},    
	pdftitle={newRPRSMforQAP: 
       Restricted Dual PRSM for the QAP
	           },    
	pdfnewwindow=true,      
	colorlinks=true,        
	linkcolor=blue,         
	citecolor=green,        
	filecolor=magenta,      
	urlcolor=cyan           
}
\newcommand*\mathinhead[2]{\texorpdfstring{$\boldsymbol{#1}$}{#2}}
\usepackage[noabbrev]{cleveref}
\makeatletter
\makeatother


\DeclareMathOperator{\argmin}{argmin}

\usepackage{mathtools}
\DeclarePairedDelimiter{\ceil}{\lceil}{\rceil}

\usepackage{refcount} 

\def\R{\mathbb{R}}
\def\Sc{\mathbb{S}}

\def\Snt{\Sc^{n^2+1}}

\def\Snp{\Sc_+^n}

\def\Snpp{\Sc_{++}^n}

\def\Rn{\mathbb{R}^n}

\def\bY{\overline{Y\strut}}
\def\hY{\widehat{Y}}
\def\hV{\widehat{V}}

\def\eqref#1{{\normalfont(\ref{#1})}}


\def\eqref#1{{\normalfont(\ref{#1})}}

\newtheorem{theorem}{Theorem}[section]

\newtheorem{definition}[theorem]{Definition}

\newtheorem{prop}[theorem]{Proposition}

\newtheorem{lem}[theorem]{Lemma}

\newtheorem{cor}[theorem]{Corollary}

\newtheorem{remark}[theorem]{Remark}

\newtheorem{lemma}[theorem]{Lemma}

\crefname{thm}{Theorem}{Theorems}
\Crefname{thm}{Theorem}{Theorems}
\crefname{problem}{Problem}{Theorems}
\Crefname{problem}{Problem}{Theorems}
\Crefname{assump}{Assumption}{Theorems}
\crefname{assump}{Assumption}{Theorems}
\crefname{conjecture}{Conjecture}{Theorems}
\Crefname{conjecture}{Conjecture}{Theorems}
\crefname{prop}{Proposition}{Propositions}
\Crefname{prop}{Proposition}{Propositions}
\crefname{cor}{Corollary}{Corollaries}
\Crefname{cor}{Corollary}{Corollaries}
\crefname{lem}{Lemma}{Lemmas}
\Crefname{lem}{Lemma}{Lemmas}
\theoremstyle{definition}
\crefname{defn}{definition}{definitions}
\Crefname{defn}{Definition}{Definitions}
\crefname{conj}{Conjecture}{Conjectures}
\Crefname{conj}{Conjecture}{Conjectures}
\crefname{remark}{Remark}{Remarks}
\Crefname{remark}{Remark}{Remarks}
\crefname{rmk}{Remark}{Remarks}
\Crefname{rmk}{Remark}{Remarks}
\crefname{example}{Example}{Examples}
\Crefname{example}{Example}{Examples}
\crefname{align}{}{}
\Crefname{align}{}{}
\crefname{equation}{}{}
\Crefname{equation}{}{}


\newcommand{\textdef}[1]{\textit{#1}\index{#1}}

\newcommand{\ZZ}{{\mathcal Z} }

\newcommand{\LL}{{\mathcal L} }
\newcommand{\RR}{{\mathcal R} }
\newcommand{\YY}{{\mathcal Y} }

\newcommand{\DD}{{\mathcal D} }

\newcommand{\GG}{{\mathcal G} }

\newcommand{\PP}{{\mathcal P} }

\newcommand{\whV}{{\widehat{V}} }

\newcommand{\cG}{{\mathcal G} }

\newcommand{\rPRSM}{\textbf{rPRSM}\,} 
\newcommand{\rPRSMp}{\textbf{rPRSM}}
\newcommand{\QAP}{\textbf{QAP}\,}
\newcommand{\QAPp}{\textbf{QAP}}

\newcommand{\ADMM}{\textbf{ADMM}\,}
\newcommand{\ADMMp}{\textbf{ADMM}}

\newcommand{\PRSM}{\textbf{PRSM}\,}
\newcommand{\PRSMp}{\textbf{PRSM}}
\newcommand{\DNN}{\textbf{DNN}\,}
\newcommand{\DNNp}{\textbf{DNN}}
\newcommand{\KKT}{\textbf{KKT}\,}
\newcommand{\KKTp}{\textbf{KKTp}\,}
\newcommand{\SDP}{\textbf{SDP}\,}

\newcommand{\SDPp}{\textbf{SDP}}
\newcommand{\FR}{\textbf{FR}\,}
\newcommand{\FRp}{\textbf{FR}}

\newcommand{\NN}{{\mathcal{N}}}

\newcommand{\A}{{\mathcal A}}

\newcommand{\bbm}{\begin{bmatrix}}
\newcommand{\ebm}{\end{bmatrix}}
\newcommand{\bem}{\begin{pmatrix}}
\newcommand{\eem}{\end{pmatrix}}
\newcommand{\beq}{\begin{equation}}
\newcommand{\beqs}{\begin{equation*}}
\newcommand{\bet}{\begin{table}}
\newcommand{\eeq}{\end{equation}}
\newcommand{\eeqs}{\end{equation*}}
\newcommand{\beqr}{\begin{eqnarray}}

\renewcommand{\vec}{{\rm vec}}


\DeclareMathOperator{\Null}{null}

\DeclareMathOperator{\range}{range}

\DeclareMathOperator{\kvec}{{vec}}

\DeclareMathOperator{\diag}{{diag}}
\DeclareMathOperator{\Diag}{{Diag}}

\DeclareMathOperator{\Mat}{{Mat}}





\newcommand{\nc}{\newcommand}
\nc{\arrow}{{\rm arrow\,}}
\nc{\Arrow}{{\rm Arrow\,}}
\nc{\BoDiag}{{\rm B^0Diag\,}}
\nc{\bodiag}{{\rm b^0diag\,}}
\newcommand{\oodiag}{{\rm o^0diag\,}}

\nc{\Mm}{{\mathcal M}^{m} }
\nc{\Mmn}{{\mathcal M}^{mn} }
\nc{\Mnr}{{\mathcal M}_{nr} }
\nc{\Mnmr}{{\mathcal M}_{(n-1)r} }
\nc{\kwqqp}{Q{$^2$}P\,}
\nc{\kwqqps}{Q{$^2$}Ps}

\nc{\notinaho}{(X,S)\in \overline{AHO}(\A)}
\nc{\inaho}{(X,S)\in AHO(\A)}

\newcommand{\bea}{\begin{eqnarray}}%
\newcommand{\eea}{\end{eqnarray}}%
\newcommand{\beas}{\begin{eqnarray*}}%
\newcommand{\eeas}{\end{eqnarray*}}%
\newcommand{\Rnn}{\R^{n \times n}}%

%
%
%
%
%
%
%
%
%
%
%
%
%
%
%
%
%
%
%
{}



\newcommand{\Hnp}[1][]{\,\mathbb{H}_+^{\ifthenelse{\equal{#1}{}}{n}{#1}}}
\newcommand{\Hn}[1][]{\,\mathbb{H}^{\ifthenelse{\equal{#1}{}}{n}{#1}}}
\newcommand{\Hk}[1][]{\,\mathbb{H}^{\ifthenelse{\equal{#1}{}}{k}{#1}}}
\newcommand{\Dn}[1][]{\,\mathbb{D}^{\ifthenelse{\equal{#1}{}}{n}{#1}}}




\newcommand{\cP}{\mathcal{P}}


%
%


\begin{document}
\title{A Restricted Dual Peaceman-Rachford Splitting Method for \QAP}
     \author{
\href{https://uwaterloo.ca/combinatorics-and-optimization/about/people/n6graham}{Naomi
Graham }
\and
\href{https://huhao.org/}{Hao Hu}
\and
\href{https://uwaterloo.ca/combinatorics-and-optimization/about/people/n6graham}
{Haesol Im}
\and
\href{}{Xinxin
Li}\thanks{School of Mathematics, Jilin University, Changchun, China. 
E-mail: 
{\tt xinxinli@jlu.edu.cn}. This work was supported by the National Natural 
Science Foundation of China (No.11601183) and  Natural Science Foundation 
for Young Scientist of Jilin Province (No. 20180520212JH).}
\and
\href{http://www.math.uwaterloo.ca/~hwolkowi/}
{Henry Wolkowicz}%
}


\break
\date{\currenttime, \today \\
        Department of Combinatorics and Optimization\\
        Faculty of Mathematics, University of Waterloo, Canada. \\
Research supported by NSERC.
}
\maketitle
\tableofcontents
\listoffigures
\listoftables

\begin{abstract}
We revisit and strengthen splitting methods for solving
doubly nonnegative, \DNNp, relaxations of the 
quadratic assignment problem, \QAPp. 
We use a modified restricted contractive splitting method,
\rPRSMp, approach. Our strengthened bounds and new dual multiplier 
estimates improve on the bounds and convergence results in the literature.
\end{abstract}

\section{Introduction}

\index{\QAPp, quadratic assignment problem}

We revisit and strengthen splitting methods for solving
doubly nonnegative,
\DNNp, relaxations of the quadratic assignment problem, \QAPp. 
We use a modified 
\textdef{restricted contractive Peaceman-Rachford splitting method, \rPRSMp}
approach. We obtain strengthened bounds from improved
lower and upper bounding techniques, and from strengthened
dual multiplier estimates. We compare with recent results 
in~\cite{OliveiraWolkXu:15}. In addition, we provide a new derivation of 
facial reduction, \FRp, and
the gangster constraints, and show the strong connections between them.

\index{doubly nonnegative, \DNNp}
\index{\DNNp, doubly nonnegative}

The \textdef{quadratic assignment problem, \QAPp}, is one of the fundamental 
combinatorial optimization problems in the fields of optimization and 
operations research, and includes many fundamental applications. 
It is arguably one of the hardest of the NP-hard problems.
The \QAP models real-life problems such as facility location.
Suppose that we are given a set of $n$ facilities and a set of $n$ locations. 
For each pair of locations $(s,t)$ a distance $B_{st}$ is specified, and for each 
pair of facilities $(i,j)$ a weight or flow $A_{i,j}$ is specified, e.g.,~the 
amount of supplies transported between the two facilities. 
In addition, there is a location 
(building) cost $C_{is}$ for assigning a facility $i$ to a specific 
location $s$. The problem is to assign each facility to a 
distinct location with the goal of minimizing the sum over all 
facility-location pairs of the distances between locations
multiplied by the corresponding flows between facilities, along with the
sum of the location costs. Other applications include:
scheduling, production, 
computer manufacture (VLSI design), chemistry (molecular conformation), 
communication, and other fields, see 
e.g.,~\cite{Els77,GeoGra76,ugi1979neue,krarup1978computer,heffley1977assigning}. 
Moreover, many classical combinatorial optimization problems, including 
the traveling salesman problem, maximum clique problem, and graph partitioning 
problem, can all be expressed as a \QAPp, 
see e.g.,~\cite{BhatiRasool:14,PardWolk:94}.  For more information about 
\QAPp, we refer the readers to~\cite{MR1490831,prw:93}.

That the \QAP \Cref{eq:qap} is NP-hard is given in~\cite{garjo}.
The cardinality of the feasible set of permutation
matrices $\Pi$ is $n!$ and it is 
known that problems typically have \emph{many} local minima.
Up to now, there are three main classes of methods for solving \QAPp.
The first type is heuristic algorithms, such as genetic algorithms,
e.g.,~\cite{Dre03}, ant systems~\cite{gambardella1999ant} and meta-heuristic
algorithms, e.g.,~\cite{bashiri2012effective}. 
These methods usually have short running times and often give optimal or near-optimal solutions. 
However the solutions from heuristic algorithms are not reliable and the
performance can vary depending on the type of problem.
The second type is branch-and-bound algorithms.
Although this approach gives exact solutions, it can be very time
consuming and in addition requires strong bounding techniques. 
For example, obtaining an exact solution using the
branch-and-bound method for $n=30$ is still considered to be
computationally challenging.
The third type is based on semidefinite programming, \SDPp. 
Semidefinite programming is proven to have successful implementations and 
provides tight relaxations, see~\cite{AnsBri:00,KaReWoZh:94}.
There are many well-developed \SDP solvers based on e.g.,~interior point 
methods, e.g.,~\cite{SaVaWo:97,AnjosLasserre:11,mitchell1998interior}. 
However, the running time of the interior point methods do not scale
well, and the \SDP relaxations become very large for the \QAPp.
In addition, adding additional polyhedral constraints 
such as interval constraints,
can result in having $O(2n^2)$ constraints, a prohibitive number for
interior point methods.

\index{\ADMMp, alternating direction method of multipliers}

Recently, Oliveira at el.,~\cite{OliveiraWolkXu:15} use an
\textdef{alternating direction method of multipliers, \ADMMp}, to solve
a  facially reduced, \FRp,  \SDP relaxation. The \FR 
allows for a natural splitting of variables between the \SDP cone and
polyhedral constraints.
The algorithm provides competitive lower and upper bounds for \QAPp. 
In this paper, we modify and improve on this work.

\subsection{Background}
It is known e.g.,~\cite{edw77copy}, that many of the \QAP 
models, such as the facility location problem,
can be formulated using the \textdef{trace formulation}:   
\begin{equation}\label{eq:qap}
p^*_{\QAPp}:=\min_{X \in \Pi}\langle AXB-2C, X\rangle,
\end{equation} 
where $A,B\in\mathbb{S}^n$ are real symmetric $n\times n$ matrices, $C$ is a real $n\times n$ matrix, $\langle \cdot, \cdot\rangle$ denotes the trace 
inner product, i.e.,~$\langle Y,X\rangle=\trace  (YX^T)$, and 
$\Pi$ denotes the set of $n\times n$ permutation matrices.
\index{$\Pi$, permutation matrices}
\index{permutation matrices, $\Pi$}
\index{trace inner product, $\langle Y,X\rangle=\trace  YX^T$}
\index{$\langle Y,X\rangle=\trace  YX^T$, trace inner product}

We use the following notation from~\cite{OliveiraWolkXu:15}.
We denote the \textdef{matrix lifting}
\index{vectorization of $X$ columnwise, $\kvec(X)$}
\index{$\kvec(X)$, vectorization of $X$ columnwise}
\begin{equation}\label{eq:lift}
Y:=\left (
\begin{array}{c}
1\\ x \end{array} 
\right )
(1~~x^T)\in \mathbb{S}^{n^2+1},\quad x = \vec(X) \in \R^{n^2},
\end{equation}
where $\kvec(X)$ is the vectorization of the matrix $X\in \Rnn$,
columnwise. Then $Y \in \textdef{$\Sc^{n^2+1}_+$}$,
the space of real symmetric positive semidefinite matrices of order
$n^2+1$, and the rank,
$\rank(Y)=1$. Indexing the rows and columns of $Y$ from $0$ to $n^2$,
we can express $Y$ in \Cref{eq:lift} using a block representation as 
follows:
\begin{equation}\label{eq:blocked}
Y=
\begin{bmatrix}
	Y_{00} & \bar{y}^T \\
	\bar{y}   &  \bY
\end{bmatrix}, \quad
\textdef{$\bar y$}=
\begin{bmatrix}
Y_{(10)}\\
Y_{(20)}\\
\vdots\\
Y_{(n0)}
\end{bmatrix},
\quad \hbox{and}\quad 
\textdef{$\bY = xx^T$}= \begin{bmatrix}
\bY_{(11)} & \bY_{(12)} & \cdots & \bY_{(1n)}\\
\bY_{(21)} & \bY_{(22)} & \cdots & \bY_{(2n)}\\
\vdots & \ddots & \ddots & \vdots\\
\bY_{(n1)} & \ddots& \ddots & \bY_{(nn)}
\end{bmatrix},
\end{equation}
where
\[
\textdef{$\bY_{(ij)}=X_{:i}X_{:j}^T$}\in \R^{n\times n},\, \forall i,j=1,\ldots,n, \
Y_{(j0)}\in\mathbb{R}^n,\forall j=1,\ldots,n,
\text{  and  } \ x \in \R^{n^2}.
\]
Let
\index{$B\otimes A$, Kronecker product}
\index{Kronecker product, $B\otimes A$}
\index{$L_Q=
\begin{bmatrix}
0 & -(\kvec(C)^T)\\
-\kvec(C) & B\otimes A
\end{bmatrix}$}
\[
L_Q=
\begin{bmatrix}
0 & -(\kvec(C)^T)\\
-\kvec(C) & B\otimes A
\end{bmatrix},
\]
where $\otimes$ denotes the Kronecker product.
With the above notation and matrix lifting, we can reformulate 
the \QAP \Cref{eq:qap} equivalently as
\begin{equation}
\label{eq:qapequal}
\begin{array}{rl}
p^*_{\QAPp}=
	\min & \langle AXB-2C, X\rangle = \langle  L_Q, Y \rangle \\
			\text{s.t.} 
			& Y:=
			\begin{pmatrix}
			1\\
			x
		\end{pmatrix}
			\begin{pmatrix}
	1\cr x
		\end{pmatrix}^T \in \Snt_+\\
	& X = \Mat(x)\in \Pi,
	\end{array}
\end{equation}
where \textdef{$\Mat = \kvec^{*}$}.

In~\cite{KaReWoZh:94},
Zhao et al. derive an \SDP relaxation as the dual of
the Lagrangian relaxation of a quadratically constrained version
of \cref{eq:qapequal}, i.e.,~the constraint that $X\in \Pi$ is replaced
by quadratic constraints, e.g.,
\[
\|Xe-e\|^2=\|X^Te-e\|^2=e, \, X\circ X = X, \, X^TX=XX^T=I,
\] 
where $\circ$ is the Hadamard product and $e$ is the vector of all ones.
After applying the so-call \emph{facial reduction} technique
to the \SDP relaxation, the variable $Y$ is
expressed as $Y={\hV}R{\hV}^T$, for some full column rank
matrix $\widehat{V}\in \R^{(n^2+1)\times ((n-1)^2+1)}$ defined below
in \Cref{sect:FRnDNN}. The \SDP then takes 
on the smaller, greatly simplified form:
\begin{equation}
\label{eq:sdp1}
	\begin{array}{cl}
	\min\limits_{R} & \langle \widehat{V}^T L_Q\widehat{V}, R\rangle \\
	\text{s.t. } & \GG_{\bar J}(\widehat{V}R\widehat{V}^T) = u_{0}\\
	& R\in \mathbb{S}_+^{(n-1)^2+1}.
	\end{array}
\end{equation}
The linear transformation $\GG_{\bar J}(\cdot )$ is called
the \emph{gangster operator} as it fixes certain matrix elements of the
matrix, and $u_{0}$ is the first unit vector and so all but the first
element are fixed to zero.
The Slater constraint qualification, strict feasibility, holds for 
both \Cref{eq:sdp1} and its dual, see~\cite[Lemma 5.1, Lemma 
5.2]{KaReWoZh:94}. We refer to 
\cite{KaReWoZh:94} for details on the derivation of this 
\textdef{facially reduced \SDPp}. 

\index{${\hat V}$}

We now provide the details for ${\hV}$, 
the gangster operator 
$\GG_{\bar J}$, and the \textdef{gangster index set, $\bar J$}. 
\index{$\bar J$, gangster index set} 
\begin{enumerate}
	\item 
Let $\hY$ be the barycenter of the set of feasible lifted $Y$
\cref{eq:blocked} of rank one for the \SDP
relaxation of \cref{eq:qapequal}. Let the matrix 
\textdef{$\widehat{V} $}$\in \R^{(n^2+1)\times((n-1)^2+1)}$ 
have orthonormal columns that span the range of $\hY$.\footnote{There
are several ways of constructing such a matrix $\hV$. One way is
presented in \Cref{prop:widehatVshort}, below.
}
Every feasible $Y$ of the \SDP relaxation is contained in the 
\textdef{minimal face, $\mathcal{F}$} of $\Sc^{n^2+1}_+$:
\index{$\mathcal{F}, $minimal face}
	\begin{equation*}
		\mathcal{F} = {\hV}
		\mathbb{S}_+^{(n-1)^2+1} {\hV}^T \unlhd \Sc^{n^2+1}_+.
	\end{equation*}
	
\index{gangster operator, $\GG_{\bar J}$} 
\index{$\GG_{\bar J}$, gangster operator} 
	\item The gangster operator 
is the linear map $\GG_{\bar J} : \mathbb{S}^{n^2+1} \to \R^{|\bar J|}$ 
defined by
	\begin{equation}
	\label{def:gangster_operator}
		\GG_{\bar J}(Y) = Y_{\bar J} \in \R^{|\bar J|},
	\end{equation}
	where $\bar J$ is a subset of (upper triangular) matrix indices of $Y$. 
\begin{remark}
By abuse of notation, we also consider the gangster operator 
as a linear map from $\mathbb{S}^{n^2+1}$ to $\mathbb{S}^{n^2+1}$,
depending on the context.
	\begin{equation}
	\label{eq:alt_gangster}
	\GG_{\bar J}:\mathbb{S}^{n^2+1}\rightarrow \mathbb{S}^{n^2+1}, \quad 
			\left[ \GG_{\bar J}(Y)\right]_{ij}
			=\left\{
			\begin{array}{cl}
				Y_{ij}		& \hbox{if}\,(i,j)\in \bar 
				J\;\hbox{or}\;(j,i)\in \bar J, \\
				0	& \hbox{otherwise},
			\end{array}\right.	
	\end{equation}
Both formulations 
	of $\GG_{\bar J}$ are used for defining a constraint which ``shoots 
	holes" 
	in the 
matrix $Y$ with entries indexed using $\bar J$. Although the latter 
formulation
is more explicit, it is not surjective and is not used in the implementations.
\end{remark} 
	
\item  
The gangster index set $\bar J$ is defined 
to be the union of the top left index $(00)$ with the set of indices
$J$ with $i<j$ in the submatrix $\bY \in \mathbb{S}^{n^2}$
corresponding to:
\begin{equation} \hspace{-3cm}
\label{def:Gangster_indices}
	\begin{array}{l}
		\text{(a) the off-diagonal elements in the $n$ diagonal blocks in $\bY$
		in \Cref{eq:blocked} } ;  \\
		\text{(b) the diagonal elements in the off-diagonal blocks in $\bY$
		in  \Cref{eq:blocked} }.
	\end{array}
\end{equation}
Many of the constraints that arise from the index set $J$ are
redundant. We could remove the 
indices in the submatrix $\bY \in \mathbb{S}^{n^2}$ corresponding to all 
the diagonal positions of the last column of blocks and
the additional $(k-2,k-1)$ block. 
In our implementations we take advantage of redundant constraints when
used as constraints in the subproblems.

\item 
The notation $u_{0}$ in \Cref{eq:sdp1} denotes a vector in $\{0,1\}^{|\bar 
J|}$ 
with $1$ only in the first coordinate, i.e.,~the $0$-th unit vector.
Therefore \Cref{eq:sdp1} forces all 
the values of $\widehat{V}R\widehat{V}^T$ corresponding to the indices in 
$\bar J$ to be zero. It also implies that the first entry of $\GG_{\bar J} 
(\widehat{V}R\widehat{V}^T) $ is equal to 1, which reflects the fact that 
$Y_{00} =1$ from $\Cref{eq:blocked}$. Using the alternative 
definition of $\GG_{\bar J}$ in \Cref{eq:alt_gangster}, the equivalent 
constraint is $\GG_{\bar J}(Y) = E_{00}$ where \textdef{$E_{00}$} $\in 
\mathbb{S}^{n^2+1}$ is the $(0,1)$-matrix with $1$ only in the  
$(00)$-position. 

\index{$u_0$, first unit vector}
\index{first unit vector, $u_0$}

\end{enumerate}        
  										
Since interior point solvers do not scale well, especially when
nonnegative cuts are added to the \SDP relaxation in \Cref{eq:sdp1}, 
Oliveira et al.~\cite{OliveiraWolkXu:15} propose using an \ADMM
approach. They introduce nonnegative cuts (constraints) and
obtain a \textdef{doubly nonnegative, \DNNp}, model.
The \ADMM approach is further motivated by the natural splitting of variables 
that arises with facial reduction:
\index{\DNNp, doubly nonnegative}
\begin{equation}
\label{FRrelaxation}
(\DNNp) \qquad \begin{array}{cl}
\min\limits_{R,Y} & \langle L_Q,Y \rangle \\
\text{s.t.} & \GG_{\bar J}(Y) = u_0\\
& Y = \widehat{V} R \widehat{V}^T\\
& R \succeq 0\\
& 0\leq Y \leq 1.
\end{array}
\end{equation}
The output of \ADMM is used to compute lower and upper bounds to 
the original \QAP \Cref{eq:qap}. For most instances in 
QAPLIB\footnote{\url{http://coral.ise.lehigh.edu/data-sets/qaplib/qaplib-problem-instances-and-solutions/}},
\cite{OliveiraWolkXu:15} obtain competitive lower and upper bounds for the 
\QAP using \ADMMp. 
And in several instances, the relaxation and 
bounds provably find an optimal permutation matrix.

\subsubsection{Further Notation}
We let $\Rn$ denote the usual Euclidean space of dimension $n$.
We use \textdef{$\mathbb{S}^n$} to denote the space of real symmetric matrices
of order $n$.
We use \textdef{$\Snp$} (\textdef{$\Snpp$}, resp.) to denote the cone of 
$n$-by-$n$ positive semidefinite (definite) matrices.
We write \textdef{$X \succeq 0$} if $X\in \Snp$ and \textdef{$X \succ
0$} if $X\in \Snpp$.
Given $X\in \Rnn$, we use $\trace (X)$ to denote the trace of $X$. 
We use \textdef{$\circ$} to denote the Hadamard (elementwise) product.
Given a matrix $A\in R^{m\times n}$, we use $\range(A)$ and $\Null(A)$ to 
denote the range of $A$ and the null space of $A$, respectively.
\index{$\range(A)$, range space}
\index{range space$, \range(A)$}
\index{$\Null(A)$, nullspace}
\index{nullspace$, \Null(A)$}

We denote $u_{0}$ to be the unit vector of appropriate dimension with
$1$ in the first coordinate. By abuse of notation, for $n \geq 1$, 
$e_{n}$ denotes the vector of all ones of dimension $n$.
 $E_{n}$ denotes the $n \times n$ matrix of all ones. We omit the 
 subscripts of $e_n$ and $E_n$ when the dimension is clear. 
\index{$E$, matrix of ones}
\index{matrix of ones, $E$}
\index{$e$, vector of ones}
\index{vector of ones, $e$}

\subsection{Contributions and Outline}
We begin in \Cref{sect:model} with the modelling and theory.
We first give a new joint derivation of the 
so-called gangster constraints and the facial reduction procedure. 
We then propose a strengthened model of \cref{FRrelaxation}, by 
imposing a trace constraint to the variable $R$, and use this for 
deriving a modified 
\textdef{restricted contractive Peaceman-Rachford splitting method, \rPRSMp}
for solving the strengthened model. 
We improve lower bounds presented in~\cite{OliveiraWolkXu:15} by utilizing the trace constraint added to the variable $R$. 
We also adopt a randomized perturbation approach to improve upper
bounds. In addition, we improve the running time with new dual variable updates as well as adopting additional termination conditions. 
Our numerical results in \Cref{sect:numeric} show significant 
improvements over the previous results in~\cite{OliveiraWolkXu:15}. 

\section{The \DNN Relaxation}
\label{sect:model}
In this section we present details of our
\textdef{doubly nonnegative, \DNNp},
relaxation of the \QAPp. This is related to the \SDP relaxation derived in
\cite{KaReWoZh:94} and the \DNN relaxation in~\cite{OliveiraWolkXu:15}.
Our approach is novel in that we see the gangster constraints and facial 
reduction arise naturally from the 
relaxation of the row and column sum constraints for $X \in \Pi$.

\subsection{Novel Derivation of \DNN Relaxation}
The \SDP relaxation in~\cite{KaReWoZh:94} starts with the Lagrangian
relaxation  (dual) and forms the dual of this dual. Then redundant
constraints are deleted. We now look at a direct approach for finding
this \SDP relaxation.
\subsubsection{Gangster Constraints}

\index{$\ZZ$, zero-one matrices}
\index{zero-one matrices, $\ZZ$}
\index{$\DD$, doubly stochastic}
\index{doubly stochastic, $\DD$}
\index{$\DD_e$, row and column sums equal one}
\index{row and column sums equal one, $\DD_e$}

Let $\DD_e$ and $\ZZ$ be the sets of row and column sums equal one
matrices, and the set of binary matrices, respectively:
\[
\begin{array}{ccl}
\DD_e &:=& \{ X \in \R^{n \times n} : Xe=e, X^{T}e = e   \},  \\
\ZZ &:=& \{ X \in \R^{n \times n} : X_{ij} \in \{0,1\} , \  \forall i,j \in 
\{1,...n\} \}.
\end{array}
\]
We let $\DD = \DD_e\cap \{X\geq 0\}$ denote the \emph{doubly stochastic matrices}. The classical
Birkhoff-von Neumann Theorem~\cite{MR0054920,Birk:46} states
that the permutation matrices are the extreme points of $\DD$.
This leads to the well-known conclusion that
the set of $n$-by-$n$ permutation matrices, $\Pi$, is equal to 
the intersection:
\begin{equation}
\label{eq:reprowcolsumzo}
\Pi =  \DD_e \cap \ZZ.
\end{equation}
It is of interest that the representation in \Cref{eq:reprowcolsumzo}
leads to \underline{\emph{both}} the gangster constraints and facial
reduction for the \SDP relaxation on the lifted variable $Y$ in
\Cref{eq:blocked}, and in particular on $\bY$.
Not only that, but the row-sum constraints $Xe = e$, along with
the $0$-$1$ constraint, expressed as  $X \circ X = X$, give rise to the
constraint that the diagonal elements of the off-diagonal blocks of $\bY$
are all zero; while the column-sum constraint $X^{T}e = e$ along with the 
$0$-$1$ constraints give rise to the constraint that the off-diagonal 
elements of the diagonal blocks of $\bY$ are all zero.
The following well-known \Cref{lem:trace_relation} 
about complementary slackness is useful.
\begin{lem} \label{lem:trace_relation}
	Let $A,B \in \mathbb{S}^n$. If $A$ and $B$ have nonnegative entries, 
	then $\langle A,B \rangle = 0  \iff A\circ B = 0$.
\end{lem}
\begin{proof}
This is clear from the definitions of $A,B$.
\end{proof}

The following \Cref{thm:new_gangster_derivation} and 
\Cref{thm:constraint_derivation} together show how the representation
of $\Pi$ in \Cref{eq:reprowcolsumzo} gives rise to the gangster
constraint on the lifted matrix $Y$ in \Cref{eq:lift}. 
We first find (Hadamard product)
\textdef{exposing vector}s in \Cref{thm:new_gangster_derivation} for
lifted zero-one vectors.
\begin{lemma}[exposing vectors] \label{thm:new_gangster_derivation}
Let $X \in \ZZ$ and let $x:= \kvec(X)$. Then the following hold:
\begin{enumerate}
\item \label{new_derivation_item1}
$Xe_n = e_n \implies  [(e_ne_n^T \otimes I_n) - I_{n^2} ]  \circ xx^{T} = 
0$;
\item \label{new_derivation_item2}
$X^Te_n = e_n \implies  [(I_n\otimes e_ne_n^T) - 
I_{n^2} ] 
\circ xx^{T} = 0$.
\end{enumerate}
\end{lemma}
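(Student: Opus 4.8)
The plan is to verify each identity entrywise by unwinding the Kronecker-product structure of $x = \kvec(X)$ and the block structure of $xx^T = \bY$ from \Cref{eq:blocked}. Recall that under columnwise vectorization, the $(p,i),(q,j)$ entry of $xx^T$ — where I index by the column block $(i,j)$ and the within-block position $(p,q)$ — equals $X_{pi}X_{qj}$, i.e.\ $\bY_{(ij)} = X_{:i}X_{:j}^T$ exactly as stated. So the claim reduces to: the Hadamard product of $\bY$ with the displayed $0$-$1$ pattern matrix vanishes iff a certain collection of products $X_{pi}X_{qj}$ is zero, and then to check that the row-sum (resp.\ column-sum) hypothesis together with $X \in \ZZ$ forces those products to be zero.

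For item \ref{new_derivation_item1}: the matrix $(e_ne_n^T \otimes I_n) - I_{n^2}$ has a $1$ in position $\big((p,i),(q,j)\big)$ precisely when $p = q$ and $i \neq j$ — that is, it picks out exactly the \emph{off-diagonal} entries of the $n$ \emph{diagonal} blocks $\bY_{(ii)}$, matching description (a) in \Cref{def:Gangster_indices}. Hence the conclusion is equivalent to $X_{pi}X_{pj} = 0$ for all $p$ and all $i \neq j$. Now fix a row index $p$. Since $X \in \ZZ$, each $X_{p\cdot} \in \{0,1\}$, and $Xe_n = e_n$ says $\sum_{i} X_{pi} = 1$; a sum of $0$-$1$ entries equal to $1$ has exactly one entry equal to $1$, so $X_{pi}X_{pj} = 0$ whenever $i \neq j$. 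This gives the implication. (One can package this more slickly: $Xe = e$ and $X \circ X = X$ give, for each row $p$, $\sum_i X_{pi}^2 = (\sum_i X_{pi})^2$ minus cross terms, forcing the cross terms $\sum_{i\neq j}X_{pi}X_{pj}$, a sum of nonnegative terms, to vanish — hence each vanishes. I would likely just do the direct argument.)

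Item \ref{new_derivation_item2} is symmetric: $(I_n\otimes e_ne_n^T) - I_{n^2}$ has a $1$ in position $\big((p,i),(q,j)\big)$ precisely when $i = j$ and $p \neq q$, i.e.\ it selects the \emph{diagonal} entries of the \emph{off-diagonal} blocks $\bY_{(ij)}$, $i\neq j$ — wait, let me recheck: $i=j$ means we are inside a diagonal block $\bY_{(ii)}$, and $p\neq q$ picks the off-diagonal entries there, namely $X_{pi}X_{qi}$ — this is the \emph{column}-indexed analogue. The conclusion is thus $X_{pi}X_{qi} = 0$ for all $p \neq q$ and all columns $i$, and fixing column $i$, the hypothesis $X^Te_n = e_n$ reads $\sum_p X_{pi} = 1$, so again a sum of $0$-$1$'s equal to $1$ forces all pairwise products to vanish. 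Apply \Cref{lem:trace_relation} if one prefers to phrase "$M \circ xx^T = 0$" as "$\langle M, xx^T\rangle = 0$", though here the direct entrywise statement already gives the Hadamard form.

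The only mild obstacle is bookkeeping: getting the indexing of the Kronecker factors straight so that one correctly identifies which pattern matrix selects "off-diagonal of diagonal blocks" versus "diagonal of off-diagonal blocks." The alignment between item~\ref{new_derivation_item1}/item~\ref{new_derivation_item2} and descriptions (a)/(b) in \Cref{def:Gangster_indices} should be stated explicitly, since the Hadamard pattern in item~\ref{new_derivation_item1} actually matches (a) and the one in item~\ref{new_derivation_item2} matches (b) — verifying this correspondence carefully is the step most prone to an off-by-a-transpose slip, and it is worth writing out the $\big((p,i),(q,j)\big)$-entry of each Kronecker expression once, cleanly, before concluding.
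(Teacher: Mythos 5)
Your proof is correct, but it takes a genuinely different route from the paper's. The paper argues globally: starting from $(e_n^T\otimes I_n)x=e_n$, it multiplies by $(e_n\otimes I_n)$, subtracts $x$, multiplies by $x^T$, and takes the trace; the identity $x\circ x=x$ then forces $\trace\bigl([(e_ne_n^T\otimes I_n)-I_{n^2}]\,xx^T\bigr)=0$, and since both factors are symmetric and entrywise nonnegative, \Cref{lem:trace_relation} upgrades the vanishing inner product to a vanishing Hadamard product. You instead verify the identity entrywise: the pattern matrix selects exactly the products $X_{pi}X_{pj}$ with $i\neq j$ (resp.\ $X_{pi}X_{qi}$ with $p\neq q$), and a $0$--$1$ row (resp.\ column) summing to $1$ has a single nonzero entry, so every such product vanishes. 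Your version is more elementary and makes the combinatorial content transparent; the paper's version avoids all index bookkeeping and, by routing through \Cref{lem:trace_relation}, directly exhibits the pattern matrices as exposing vectors in the Hadamard/nonnegativity sense, which is the viewpoint the surrounding discussion is building toward. Both are complete proofs of the lemma as stated.

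One labeling slip, of precisely the off-by-a-transpose kind you warned about: in item~\ref{new_derivation_item1} you correctly derive the condition $p=q$, $i\neq j$, but then describe it as the off-diagonal entries of the diagonal blocks and match it to (a) of \Cref{def:Gangster_indices}. With blocks indexed by the column pair $(i,j)$ and within-block positions by $(p,q)$, the condition $i\neq j$, $p=q$ selects the \emph{diagonal} entries of the \emph{off-diagonal} blocks, i.e.\ description (b); correspondingly item~\ref{new_derivation_item2} matches (a). This agrees with the paper's own identification in the proof of \Cref{thm:constraint_derivation} and in the paragraph preceding \Cref{lem:trace_relation}. The mislabeling does not affect the validity of your proof of the lemma itself, since the products you identify and annihilate are the right ones, but your closing paragraph repeats the reversed attribution, and carrying it forward would corrupt the derivation of \Cref{thm:constraint_derivation}.
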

\begin{proof}
We first show \Cref{new_derivation_item1}. 
Let $X \in \ZZ$ and $Xe_n = e_n$. 
We note that $X \in \ZZ \iff x \circ x -x = 0$ and 
\[
Xe_n = e_n \iff I_nXe_n = e_n \iff (e_n^{T} \otimes I_n) x = e_{n}.
\]
We begin by multiplying both sides by $(e_n^{T} \otimes I)^T= e_n \otimes I$:
\[
\begin{array}{rrcl}
& (e_n^{T} \otimes I_n) x & =& e_{n} \\
\implies& (e_n \otimes I_n)(e_n^{T} \otimes I_n) x & = &(e_n \otimes I_n 
)e_n = e_{n^2} \\
\implies &[(e_n \otimes I_n)(e_n^{T} \otimes I_n) - I_{n^2} ] x & =& 
e_{n^2}  - x \\
\implies& [(e_ne_n^T \otimes I_n) - I_{n^2} ] xx^{T} 
& =& 
e_{n^2}x^{T}  - xx^{T}  \\
\implies &\trace  \left( [(e_ne_n^T \otimes I_n) - 
I_{n^2} ] \ 
xx^{T} \right)  & =& \trace  (e_{n^2}x^{T}  - xx^{T} ).
\end{array}
\]
Since $x \circ x = x$, we have $\trace  (e_{n^2}x^{T}  - 
xx^{T} ) = 0$. Therefore, it holds that 
$$\trace  \left(  [(e_ne_n^T \otimes I_n) - I_{n^2} ]  \ 
xx^{T} \right)=0.$$
We note that $ [(e_ne_n^T \otimes I_n) - I_{n^2} ] $ and $xx^{T}$ are both 
symmetric and nonnegative.
Hence, by \Cref{lem:trace_relation},  
we get
\[
[(e_ne_n^T \otimes I_n) - I_{n^2} ] \circ xx^{T} = 0.
\]
The proof for \Cref{new_derivation_item2} follows by using a similar argument.
\end{proof}

\begin{cor} \label{thm:constraint_derivation}
Let $X\in\Pi$, and let $Y$ satisfy \Cref{eq:lift}.
Let $\GG_{\bar J}, \bar{J}$ be defined in 
\cref{def:gangster_operator} and \Cref{def:Gangster_indices}. Then the 
following hold:
\begin{enumerate}
\item \label{item:gangster_on_Y}
$\GG_{\bar J}(Y) = u_0$;
\item \label{item:Yin01}
$0\leq Y\leq 1, \, Y\succeq 0, \, \rank(Y)=1$.
\end{enumerate}
\end{cor}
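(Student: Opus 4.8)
The plan is to prove \Cref{thm:constraint_derivation} by combining the exposing-vector identities from \Cref{thm:new_gangster_derivation} with the elementary structure of a rank-one lift $Y = \begin{pmatrix}1\\x\end{pmatrix}\begin{pmatrix}1\\x\end{pmatrix}^T$ where $x = \kvec(X)$ and $X \in \Pi$. Since $\Pi = \DD_e \cap \ZZ$ by \Cref{eq:reprowcolsumzo}, any $X \in \Pi$ satisfies $Xe_n = e_n$, $X^Te_n = e_n$, and $X \in \ZZ$. Part \ref{item:Yin01} is the easy half and I would dispatch it first: $Y \succeq 0$ and $\rank(Y) = 1$ are immediate from the outer-product form \Cref{eq:lift}; the bound $0 \leq Y \leq 1$ follows because every entry of $Y$ is a product of two entries of the vector $\begin{pmatrix}1\\x\end{pmatrix}$, and each such entry lies in $\{0,1\}$ since $X$ is a $0$-$1$ matrix, so every entry of $Y$ lies in $\{0,1\} \subseteq [0,1]$.

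For part \ref{item:gangster_on_Y} the work is to show that the entries of $Y$ indexed by $\bar J$ vanish except for the $(00)$ entry, which equals $1$. The $(00)$ entry of $Y$ is $1$ by construction in \Cref{eq:lift}, giving the first coordinate of $u_0$. It remains to handle the index set $J$ described in \Cref{def:Gangster_indices}: the off-diagonal entries of the $n$ diagonal blocks of $\bY = xx^T$, and the diagonal entries of the off-diagonal blocks of $\bY$. Here I would invoke \Cref{thm:new_gangster_derivation} directly. The matrix $(e_ne_n^T \otimes I_n) - I_{n^2}$ has a $1$ exactly in the off-diagonal positions of the diagonal blocks (and $0$ elsewhere), so the conclusion $[(e_ne_n^T \otimes I_n) - I_{n^2}] \circ xx^T = 0$ of item \ref{new_derivation_item1} says precisely that $\bY_{ij} = 0$ for those indices — these are exactly the indices in part (a) of \Cref{def:Gangster_indices}, arising from the row-sum constraint $Xe_n = e_n$. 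Symmetrically, $(I_n \otimes e_ne_n^T) - I_{n^2}$ has a $1$ exactly in the diagonal positions of the off-diagonal blocks, so item \ref{new_derivation_item2}, coming from the column-sum constraint $X^Te_n = e_n$, kills precisely the indices in part (b). Together these cover all of $J$, so $\GG_{\bar J}(Y) = u_0$.

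The one genuine verification — and the step I expect to require the most care — is the bookkeeping that matches the support of the two Kronecker-structured matrices $(e_ne_n^T \otimes I_n) - I_{n^2}$ and $(I_n \otimes e_ne_n^T) - I_{n^2}$ against the verbal description of the gangster indices in \Cref{def:Gangster_indices}. Concretely, one must check that with the columnwise vectorization $x = \kvec(X)$ and the block indexing of $\bY$ in \Cref{eq:blocked}, a $1$ in the $((i,k),(j,\ell))$ position of $e_ne_n^T \otimes I_n$ corresponds to $k = \ell$ (same ``inner'' index), so subtracting $I_{n^2}$ leaves the entries with $k = \ell$ and $i \neq j$ — i.e.\ the $(i,j)$ off-diagonal entries within the $k$-th diagonal block — and dually for the other matrix. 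This is routine once the index conventions are pinned down, but it is where an error would hide, so I would state the correspondence explicitly rather than gesture at it. Everything else follows formally from the two parts of \Cref{thm:new_gangster_derivation}.
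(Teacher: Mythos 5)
Your overall strategy is exactly the paper's: quote the two Hadamard identities of \Cref{thm:new_gangster_derivation}, identify the supports of $(e_ne_n^T\otimes I_n)-I_{n^2}$ and $(I_n\otimes e_ne_n^T)-I_{n^2}$, match them against \Cref{def:Gangster_indices}, and dispose of part 2 from the outer-product structure and $\Pi=\DD_e\cap\ZZ$. Part 2 and the general architecture are fine. But the one step you yourself flagged as ``where an error would hide'' is where you err: you have the two supports swapped. With the convention that the $(i,j)$ outer block of $A\otimes B$ is $A_{ij}B$, every block of $e_ne_n^T\otimes I_n$ equals $I_n$, so after subtracting $I_{n^2}$ the nonzeros sit at the \emph{diagonal} entries of the \emph{off-diagonal} blocks (part (b) of \Cref{def:Gangster_indices}), not the off-diagonal entries of the diagonal blocks; dually, $(I_n\otimes e_ne_n^T)-I_{n^2}$ is supported on the \emph{off-diagonal} entries of the \emph{diagonal} blocks (part (a)). This is what the paper states, and it is also what your own index computation gives: you correctly derive ``$k=\ell$ and $i\neq j$'' for the first matrix, which is a diagonal inner position inside an off-diagonal block, and then misread it as an off-diagonal position inside a diagonal block. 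Consequently your attribution is backwards: the row-sum constraint $Xe_n=e_n$ is responsible for part (b), and the column-sum constraint $X^Te_n=e_n$ for part (a) (as the paper's prose preceding \Cref{lem:trace_relation} also records).

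The corollary itself survives this slip, because for $X\in\Pi$ both identities of \Cref{thm:new_gangster_derivation} hold simultaneously and the union of the two supports is all of $J$ either way; so $\GG_{\bar J}(Y)=u_0$ still follows. But as written, the stated correspondence between constraints and index subsets is false, and the ``careful bookkeeping'' paragraph contradicts its own conclusion, so the verification you promised has not actually been carried out correctly. Fix the two support identifications (and the (a)/(b) labels) and the proof matches the paper's.
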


\begin{proof}
Note that 
\begin{itemize}
	\item the matrix $(e_ne_n^T \otimes I_n) - I_{n^2} $  
	has 
	nonzero 
	entries on 
	the diagonal elements of the off-diagonal blocks;
	\item the matrix $(I_n\otimes e_ne_n^T) - 
	I_{n^2}$ has nonzero entries on the off-diagonal elements of 
	the diagonal 
	blocks.
\end{itemize}
	Therefore, \Cref{thm:new_gangster_derivation}, the 
	definition of the gangster indices $\bar J$ in 
	\Cref{def:Gangster_indices}, and the structure of $Y$ in 
	\Cref{eq:lift}, jointly give $\GG_{\bar J}(Y)=u_0$, i.e., 
	\Cref{item:gangster_on_Y} holds.  
	\Cref{item:Yin01} follows from
\Cref{eq:reprowcolsumzo} and the structure of $Y$ in \Cref{eq:lift}.
\end{proof}

So far, we have shown that the 
representation $\Pi = \DD_e \cap \ZZ$ gives rise to the gangster 
constraint and the polyhedral constraint on the variable $Y$ given in 
\cref{FRrelaxation}. Therefore, replacing the constraints in  
\Cref{eq:qapequal} by the items in \Cref{thm:constraint_derivation}, 
and discarding the \emph{hard} rank-one constraint, we 
get the following \SDP relaxation:
\begin{equation}
\label{eq:sdp2}
\begin{array}{rl}
p^*_{\QAPp}\geq\min\limits_{Y} &  \langle L_Q, Y\rangle \\
\text{s.t. } & \GG_{\bar J}(Y) = u_{0}\\
& 0\leq Y\leq 1\\
& Y\succeq 0.
\end{array}
\end{equation}

\subsubsection{Facially Reduced \DNN Relaxation}
\label{sect:FRnDNN}

Next, we explore the derivation for the facial 
reduction constraint $Y = \widehat{V} R \widehat{V}^T$ in 
\cref{FRrelaxation}. 
As for the derivation of the gangster constraint, it arises from
consideration of an exposing vector. We define 
\begin{equation}
\label{def:H}
\textdef{$H$} := \begin{bmatrix} 
e_n^{T} \otimes I_n \cr I_n\otimes e_n^{T}
\end{bmatrix} \in  \R^{2n\times n^2},
\end{equation}
and 
\begin{equation}\label{eq:Kexp}
K :=\begin{bmatrix} 
-e_{n^2}^T\\ \vspace{-0.7em} \\
H^T
\end{bmatrix}
\begin{bmatrix}
-e_{n^2} & H
\end{bmatrix}
=
\begin{bmatrix} n^2 & -2e_{n^2}^T \\ -2e_{n^2} & H^TH 
\end{bmatrix} \in \mathbb{S}^{n^2+1}.
\end{equation}
We note that $H$ arises from the linear equality constraints $Xe=e, X^Te =e$. The matrix 
$H$ in \Cref{def:H} is the well-known matrix in the linear
assignment problem with $\rank(H) = 2n-1$ and the rows sum up to $2e_{n^2}^T$. 
Then $\rank(K)=2n-1$ as well. Moreover, the following 
\Cref{lem:equivlinconstr} is 
clear.
\begin{lemma} 
\label{lem:equivlinconstr}
Let $H$ be given in \cref{def:H}; and let 
\[
X\in \Rnn,\, x=\kvec(X),\, Y_x = 
\begin{pmatrix}
1 \\ x
\end{pmatrix}
\begin{pmatrix}
1 \\ x
\end{pmatrix}^T.
\]
Then
\[
\begin{array}{rcl}
Xe=e, X^Te=e
&\iff   &
Hx = e 
\\&\iff   &
\begin{pmatrix}
1 \\ x
\end{pmatrix}^T
\begin{pmatrix}
-e^T \\ H^T
\end{pmatrix} = 0
\\&\implies   &
\begin{pmatrix}
1 \\ x
\end{pmatrix}
\begin{pmatrix}
1 \\ x
\end{pmatrix}^T
\begin{pmatrix}
-e^T \\ H^T
\end{pmatrix}
\begin{pmatrix}
-e^T \\ H^T
\end{pmatrix}^T = 0
\\&\iff   &
Y_x K=0.  
\end{array} 
\]
\end{lemma}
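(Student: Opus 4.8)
The plan is to prove \Cref{lem:equivlinconstr} by establishing each equivalence in the chain separately, working from the linear constraints down to the matrix equation $Y_x K = 0$.

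First I would observe that the equivalence $Xe = e, X^Te = e \iff Hx = e$ is just unpacking the definition of $H$ in \Cref{def:H}: stacking $e_n^T \otimes I_n$ and $I_n \otimes e_n^T$ against $x = \kvec(X)$ reproduces the column-sum vector $Xe$ and the row-sum vector $X^Te$ (using the standard Kronecker identity $\kvec(AXB) = (B^T \otimes A)\kvec(X)$ applied to $Xe = I_n X e_n$ and $X^Te = e_n^T X I_n$, suitably transposed). This step is routine bookkeeping. Next, $Hx = e$ is equivalent to $Hx - e = 0$, i.e.\ $\begin{pmatrix} -e^T \\ H^T \end{pmatrix}^T \begin{pmatrix} 1 \\ x \end{pmatrix} = -e + Hx = 0$; taking the transpose of this scalar-vector identity gives the second equivalence $\begin{pmatrix} 1 \\ x \end{pmatrix}^T \begin{pmatrix} -e^T \\ H^T \end{pmatrix} = 0$.

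For the implication in the third line, I would left-multiply the row-vector identity $\begin{pmatrix} 1 \\ x \end{pmatrix}^T \begin{pmatrix} -e^T \\ H^T \end{pmatrix} = 0$ by $\begin{pmatrix} 1 \\ x \end{pmatrix}$ and right-multiply by $\begin{pmatrix} -e^T \\ H^T \end{pmatrix}^T = \begin{pmatrix} -e & H \end{pmatrix}$; since a zero row vector times anything is zero, the product $\begin{pmatrix} 1 \\ x \end{pmatrix}\begin{pmatrix} 1 \\ x \end{pmatrix}^T \begin{pmatrix} -e^T \\ H^T \end{pmatrix}\begin{pmatrix} -e & H \end{pmatrix} = 0$. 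Finally, recognizing $\begin{pmatrix} -e^T \\ H^T \end{pmatrix}\begin{pmatrix} -e & H \end{pmatrix}$ as exactly the matrix $K$ from \Cref{eq:Kexp} (this is the definition of $K$ written in the other factored order, and one checks the product equals $\begin{bmatrix} n^2 & -2e_{n^2}^T \\ -2e_{n^2} & H^TH \end{bmatrix}$ using $e_{n^2}^T e_{n^2} = n^2$ and $H e_{n^2} = 2 e_{2n}$), and recognizing $\begin{pmatrix} 1 \\ x \end{pmatrix}\begin{pmatrix} 1 \\ x \end{pmatrix}^T = Y_x$, we obtain $Y_x K = 0$, completing the implication. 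The reverse of this last step would require $Y_x$ to have full rank on the relevant subspace, which fails in general, so only $\implies$ holds there — this asymmetry is the one subtlety to be careful about, but it is not an obstacle so much as a reason to state the chain with a one-directional arrow at that link.

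Honestly, there is no serious obstacle here; the lemma is a sequence of elementary manipulations. The only place demanding a moment's care is confirming that the outer-product factorization in \Cref{eq:Kexp} literally matches $K = \begin{pmatrix} -e^T \\ H^T \end{pmatrix}\begin{pmatrix} -e & H \end{pmatrix}$ (note \Cref{eq:Kexp} writes it with $-e_{n^2}^T$ on top and $-e_{n^2}$ on the left, and expands to the $2\times 2$ block form), and that the rank claim $\rank(K) = 2n-1$ follows from $\rank(H) = 2n-1$ together with the fact that prepending the dependent row $-e_{n^2}^T = -\tfrac12(\text{sum of rows of }H)$ does not change the rank of $\begin{pmatrix} -e^T \\ H^T \end{pmatrix}^T$. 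I would present the proof as a short display mirroring the statement's chain, justifying each arrow in one line.
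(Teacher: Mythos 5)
Your proof is correct and follows essentially the same route as the paper, which in fact offers no argument beyond the displayed chain itself, so your line-by-line justification of each arrow (including the observation that only $\implies$ holds at the outer-product step, since $Y_xK=0$ does not force the row vector $(1\ x^T)\bigl(\begin{smallmatrix}-e^T\\ H^T\end{smallmatrix}\bigr)$ to vanish in general) is precisely what is intended. One small correction to your side-check of \cref{eq:Kexp}: for the dimensions in the chain to work the vector $e$ in the factor $\bigl(\begin{smallmatrix}-e^T\\ H^T\end{smallmatrix}\bigr)$ must be $e_{2n}$, and the relevant identity is $e_{2n}^TH=2e_{n^2}^T$ (each column of $H$ has exactly two ones), not $He_{n^2}=2e_{2n}$ (the row sums of $H$ are $n$); consequently the top-left block of $K$ is $e_{2n}^Te_{2n}=2n$ rather than the $n^2$ printed in the paper --- a typo there, and in any case the explicit block form of $K$ plays no role in the lemma's proof.
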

\vspace{-1.8em} 
\hfill$\qedsymbol$

From \Cref{lem:equivlinconstr}, $K$ is an \textdef{exposing 
vector} for all
feasible $Y_x$ and so for all feasible $Y$ in \Cref{eq:sdp2}, see 
e.g.,~\cite{DrusWolk:16}.
Then we can choose a full column rank $\hV$ with the range equal to the
nullspace of $K$ and obtain \textdef{facial reduction}, i.e.,~all feasible $Y$ for
the \SDP relaxation satisfy
\[
Y \in \hV \mathbb{S}_{+}^{(n-1)^2+1} \hV^T \unlhd \mathbb{S}_+^{n^2+1}.
\]

There are clearly many choices for $\hV$. We present one in 
\Cref{prop:widehatVshort} that is studied in~\cite{KaReWoZh:94}. In our 
work we use $\hV$ that have orthonormal columns as 
in~\cite{OliveiraWolkXu:15}, i.e.,~ $\widehat V^T\widehat V=I$.

\begin{prop}[\!\!\cite{KaReWoZh:94}]
\label{prop:widehatVshort}
Let 
\[{\hV}=\left[
\begin{array}{c|c}
1 & 0 \\ \hline \frac{1}{n} e_{n^2} & V_e\otimes V_e
\end{array}
\right] \in \R^{(n^2+1) \times ((n-1)^2+1)},
\quad
V_e = 
\begin{bmatrix}
I_{n-1} \\ - e_{n-1}^T
\end{bmatrix} \in \R^{n\times (n-1)},
\]
and let $K$ be given as in \Cref{eq:Kexp}.
Then we have $\mathcal{R}({\hV}) =\mathcal{R}(K)$.
\qed
\end{prop}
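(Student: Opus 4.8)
The plan is to prove \Cref{prop:widehatVshort} through a kernel criterion followed by a dimension count. By \Cref{lem:equivlinconstr} the matrix $K$ of \Cref{eq:Kexp} factors as $K = N N^T$ with $N := \begin{pmatrix} -e^T \\ H^T \end{pmatrix}$, so that $\Null(K) = \Null(N^T)$ where $N^T = \begin{pmatrix} -e_{2n} & H \end{pmatrix}$. Consequently a vector $\begin{pmatrix} t \\ z \end{pmatrix}$ with $t \in \R$ and $z \in \R^{n^2}$ satisfies $K\begin{pmatrix} t \\ z \end{pmatrix} = 0$ exactly when $Hz = t\,e_{2n}$. This reduces the asserted range identity to checking that each column of $\hV$ meets the criterion $Hz = t\,e_{2n}$, and then comparing the dimensions of the two subspaces.

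First I would run the two block-columns of $\hV$ through this criterion. For the leading column $\begin{pmatrix} 1 \\ \tfrac1n e_{n^2} \end{pmatrix}$ the relevant identity is $H e_{n^2} = n\, e_{2n}$, which I would obtain from $e_{n^2} = e_n \otimes e_n$, the Kronecker mixed-product rule, and $e_n^T e_n = n$ applied to each block row of $H$ in \Cref{def:H}; this gives $H(\tfrac1n e_{n^2}) = e_{2n}$, matching $t = 1$. For the remaining columns $\begin{pmatrix} 0 \\ (V_e \otimes V_e)_{:k} \end{pmatrix}$ the relevant identity is $e_n^T V_e = 0$, immediate from $V_e = \begin{bmatrix} I_{n-1} \\ -e_{n-1}^T \end{bmatrix}$; by the mixed-product rule this forces $(e_n^T \otimes I_n)(V_e \otimes V_e) = 0$ and $(I_n \otimes e_n^T)(V_e \otimes V_e) = 0$, hence $H(V_e \otimes V_e) = 0$, matching $t = 0$. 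Thus every column of $\hV$ lies in $\Null(K)$, giving $\mathcal{R}(\hV) \subseteq \Null(K)$.

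Next I would match dimensions to upgrade the inclusion to an equality. The block-triangular shape of $\hV$ makes it injective: the top-left entry $1$ forces the coefficient of the leading column to vanish, after which injectivity of $V_e \otimes V_e$ — a consequence of $V_e$ having full column rank $n-1$ — kills the rest, so $\dim \mathcal{R}(\hV) = (n-1)^2 + 1$. Since $\rank(K) = 2n - 1$ by \Cref{eq:Kexp}, rank-nullity gives $\dim \Null(K) = (n^2+1) - (2n-1) = (n-1)^2 + 1$. The inclusion together with equal dimensions then yields $\mathcal{R}(\hV) = \Null(K)$, the face-defining subspace cut out by the exposing vector $K$, which is the range identity asserted in \Cref{prop:widehatVshort}.

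The main obstacle is the Kronecker bookkeeping in the second step: establishing $H e_{n^2} = n\, e_{2n}$ and $H(V_e \otimes V_e) = 0$ cleanly through the mixed-product rule, and in particular pinning down that it is the leading column of $\hV$ — via $t = 1$ — that supplies the single extra dimension distinguishing $\Null(K)$ from the span of the $V_e \otimes V_e$ block. The full-column-rank verification for $\hV$ and the rank-nullity count are routine once the block structure of $\hV$ and $\rank(K) = 2n-1$ are in hand.
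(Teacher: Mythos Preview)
Your argument is correct and complete: you identify $\Null(K) = \{(t,z) : Hz = t\,e_{2n}\}$ via the factorization $K = NN^T$ from \Cref{lem:equivlinconstr}, verify that both block-columns of $\hV$ satisfy this criterion using the Kronecker mixed-product rule together with $e_n^T V_e = 0$, and close with a clean dimension count exploiting $\rank(K)=2n-1$ and the block-triangular structure of $\hV$. The paper itself supplies no proof of this proposition --- it is stated with an immediate \qed\ and attributed to the cited reference --- so there is nothing to compare against; your write-up would stand as a self-contained justification.

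One remark on the statement as printed: the conclusion reads $\mathcal{R}(\hV) = \mathcal{R}(K)$, but the paragraph immediately preceding the proposition makes clear that the intended assertion is $\range(\hV) = \Null(K)$ (``choose a full column rank $\hV$ with the range equal to the nullspace of $K$''). You have correctly interpreted and proved the intended claim.
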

Our \DNN relaxation has the lifted $Y$
from \cref{eq:lift,eq:qapequal} and the \FR variable $R$
from \cref{eq:sdp1}. The relation between $R,Y$ provides the natural
\textdef{splitting}:
\begin{equation}\label{eq:facred3}
\begin{array}{rccl}
p_{\DNN}^*&=&
\min & \langle L_Q,Y \rangle \\
&&\text{s.t.}& 
\GG_{\bar J}(Y) = u_0 \\
&&& 
Y={\hV}R{\hV}^T \\
&&& R \succeq 0\\
&&& 0\leq Y\leq 1.
\end{array}
\end{equation}

A strictly feasible $\hat R\succ 0$  for 
the facially 
reduced \SDP relaxation is
given in~\cite{KaReWoZh:94}, based on the barycenter $\hat Y$ of the lifted
matrices $Y$ in \cref{eq:lift}. Therefore, $0<\hat Y_{\bar J^c}<1$
and this pair $(\hat R,\hat Y)$ is strictly feasible in \cref{eq:facred3}.

\subsection{Adding Redundant Constraints}
We continue in this section with some redundant constraints 
for the model \cref{eq:facred3} that are useful in the subproblems.
\subsubsection{Preliminary for the Redundancies}
Before we present the redundant constraints for \Cref{eq:facred3}. 
We first recall two linear transformations defined in 
\cite{KaReWoZh:94}.     

\begin{definition}[\!\!\!{\cite[Page 80]{KaReWoZh:94}}]
\label{def:bdiag-odiag}
	Let $Y\in\mathbb{S}^{n^2+1}$ be blocked as in \cref{eq:blocked}. We  
	define
	the linear transformation $\bodiag (Y):\mathbb{S}^{n^2+1}\rightarrow 
	\mathbb{S}^n$ by the sum of the $n$-by-$n$ diagonal blocks 
	of $Y$, i.e., 
	\begin{equation*}\label{eq:bdiag}
	\bodiag (Y):=\sum_{k=1}^nY_{(k\,k)}\in\mathbb{S}^n.
	\end{equation*}
We	define the linear transformation 
 	$\oodiag (Y):\mathbb{S}^{n^2+1}\rightarrow 
	\mathbb{S}^n$ by the trace of
	the block $\bY_{(ij)}$, i.e.,
	 \begin{equation*}\label{eq:odiag}
	 \oodiag (Y):=
       \left(\trace \left( \bY_{(ij)}\right)\right)_{ij} \in\mathbb{S}^n.
	 \end{equation*}
\end{definition}
With \Cref{def:bdiag-odiag}, the following lemma can be derived from 
\cite[Lemma 3.1]{KaReWoZh:94}.
\begin{lemma}[\!\!\!{\cite[Lemma 3.1]{KaReWoZh:94}}]
\label{prop:redundant_from_Zhao}
Let $V$ be any full column rank matrix such that $\range(V) = \range(\whV)$, 
where $\whV$ is given in \Cref{prop:widehatVshort}.
Suppose $Y = V R V^T$ and  $\cG_{\bar J} (Y) = u_0$ hold.
Then the following hold:
\begin{enumerate}
\item \label{item:arrow_redundant}
The first column $Y$ is identical with the diagonal of $Y$.
\item \label{item:bdiag_odiag_redundant}
$\bodiag(Y) = I_n$ and $ \oodiag(Y)= I_n$.  \qed
\end{enumerate} 
\end{lemma}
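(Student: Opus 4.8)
The plan is to reduce everything to the explicit form of $\whV$ in \Cref{prop:widehatVshort}, using the fact that since $\range(V)=\range(\whV)$, any $Y$ of the form $Y=VRV^T$ with $R\succeq 0$ can equally be written as $Y=\whV \tilde R\whV^T$ for a suitable $\tilde R\succeq 0$; the claimed identities depend only on $Y$, so without loss of generality I would work directly with $V=\whV$. Writing $\whV=\sbr{\begin{smallmatrix}1 & 0\\ \tfrac1n e_{n^2} & V_e\otimes V_e\end{smallmatrix}}$ and $R=\sbr{\begin{smallmatrix}R_{00} & r^T\\ r & \hat R\end{smallmatrix}}$, I would multiply out $Y=\whV R\whV^T$ block by block, obtaining $Y_{00}=R_{00}$, $\bar y = \tfrac1n e_{n^2}R_{00} + (V_e\otimes V_e)r$, and $\bY = \tfrac1{n^2}e_{n^2}e_{n^2}^T R_{00} + \tfrac1n e_{n^2}r^T(V_e\otimes V_e)^T + \tfrac1n (V_e\otimes V_e)r\, e_{n^2}^T + (V_e\otimes V_e)\hat R(V_e\otimes V_e)^T$. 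This reduces the lemma to linear-algebraic identities about $V_e\otimes V_e$ and the gangster pattern.

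For \Cref{item:arrow_redundant}, the statement is that $\bar y = \diag(\bY)$ together with the normalization $Y_{00}=1$; equivalently the "arrow" constraint holds. The cleanest route is to invoke \Cref{lem:equivlinconstr}: $\range(\whV)=\Null(K)$ gives $Y K = 0$, i.e. $Y\sbr{\begin{smallmatrix}-e\\ H^T\end{smallmatrix}}\sbr{\begin{smallmatrix}-e^T & H\end{smallmatrix}}=0$, hence (looking at the first row, using $Y_{00}=1$ which follows from the $(00)$ gangster constraint $\cG_{\bar J}(Y)=u_0$) one extracts $\bar y^T H^T = e^T$ and more. Combined with the gangster constraint $\cG_{\bar J}(Y)=u_0$, which zeroes exactly the off-diagonal entries of the diagonal blocks of $\bY$ and the diagonal entries of the off-diagonal blocks of $\bY$, one checks that $\diag(\bY)$ — whose $(i,j)$-block-diagonal entries are the surviving entries — is forced to equal $\bar y$. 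I would do this by writing the $H^TH$ block of $K$ explicitly and tracking which entries of $Y$ the equation $YK=0$ constrains, then matching against the gangster pattern from \Cref{def:Gangster_indices}.

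For \Cref{item:bdiag_odiag_redundant}, I would compute $\bodiag(Y)=\sum_k Y_{(kk)}$ and $\oodiag(Y)=(\trace \bY_{(ij)})_{ij}$ directly from the block formula above. The key simplification is $V_e^T V_e = I_{n-1}+e_{n-1}e_{n-1}^T$ and $e_n^T V_e = 0$, so that the cross terms involving $e_{n^2}$ contribute in a controlled way; the dominant term $(V_e\otimes V_e)\hat R(V_e\otimes V_e)^T$ is handled using $(V_e\otimes V_e)^T(V_e\otimes V_e)=(V_e^TV_e)\otimes(V_e^TV_e)$ for the $\oodiag$ computation and the partial-trace structure of $\bodiag$. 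Alternatively — and this is probably the shortest path — I would note that both $\bodiag(Y)=I_n$ and $\oodiag(Y)=I_n$ are consequences of \Cref{item:arrow_redundant} plus the gangster constraint plus $Y_{00}=1$: the gangster constraint kills the off-diagonal entries of each diagonal block, so $\bodiag(Y)$ is diagonal with $(\bodiag Y)_{ii}=\sum_k \bY_{(kk)}[i,i] = \sum_k \bar y$-entries; summing the "column-sum" relation coming from $Hx=e$ (lifted) gives exactly $1$ on each diagonal, and the off-diagonal entries of $\oodiag(Y)$ vanish because the diagonal entries of the off-diagonal blocks are gangster-zeroed, while each $\trace \bY_{(ii)}=1$ again from the lifted row/column sum constraint.

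The main obstacle I anticipate is purely bookkeeping: keeping the $n^2$-indexing of blocks $\bY_{(ij)}$ consistent with the Kronecker factorization $V_e\otimes V_e$ and with the gangster index set, so that "diagonal of an off-diagonal block" and "off-diagonal of a diagonal block" are correctly identified in terms of the $K$-pattern. Once that dictionary is fixed, each identity is a one-line consequence of $YK=0$ and $\cG_{\bar J}(Y)=u_0$. Since the statement is quoted from \cite[Lemma 3.1]{KaReWoZh:94}, I would in fact cite that reference for the detailed verification and only sketch the reduction above, remarking that the role of $V$ versus $\whV$ is immaterial because the conclusions are intrinsic properties of $Y$.
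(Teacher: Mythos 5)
Your sketch is correct, but be aware that the paper itself offers no proof of this lemma: it is stated with a terminal tombstone and attributed entirely to \cite[Lemma 3.1]{KaReWoZh:94}, so any self-contained argument is already more than what the paper provides. The route you outline does work, and the ``shortest path'' you identify is the right one; the block-multiplication of $\whV R\whV^T$ in your first paragraph can be dropped entirely, since all that is ever used is $\range(V)=\Null(K)$ (this is also why the reduction from general $V$ to $\whV$ is immaterial, and why positive semidefiniteness of $R$ is never needed). Concretely: writing $K=N^TN$ with $N=\begin{bmatrix}-e & H\end{bmatrix}$, the hypothesis gives $NY=0$, i.e.\ $H\bar y=Y_{00}e=e$ and $H\bY=e\bar y^T$. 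The second identity says that for the column of $\bY$ indexed by $(j-1)n+q$, all block column sums agree: $\sum_p \bY_{(ij)}[p,q]=\bar y_{(j-1)n+q}$ for every $i$; taking $i=j$ and using that $\GG_{\bar J}$ zeroes the off-diagonal entries of the diagonal block $\bY_{(jj)}$ collapses this sum to $\bY_{(jj)}[q,q]$, which is exactly \Cref{item:arrow_redundant}. Then \Cref{item:bdiag_odiag_redundant} follows as you say: each $\bY_{(kk)}$ is diagonal, so $\bodiag(Y)$ is diagonal with $(q,q)$ entry $\sum_k\bar y_{(k-1)n+q}=(\Mat(\bar y)e)_q=1$ by $H\bar y=e$; and $\oodiag(Y)$ has zero off-diagonal entries because the diagonals of the off-diagonal blocks are gangster-zeroed, while $\trace\bY_{(jj)}=(\Mat(\bar y)^Te)_j=1$. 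The only caveat is the one you flag yourself: \Cref{prop:widehatVshort} as printed asserts $\mathcal{R}(\whV)=\mathcal{R}(K)$, which must be read as $\range(\whV)=\Null(K)$ (as the surrounding text intends) for your argument, and the lemma itself, to make sense.
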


\subsubsection{Adding Trace $R$ Constraint}

The following \Cref{prop:trRredund} now shows that the constraint 
$\trace (R)=n+1$ in \Cref{def:R} is indeed
redundant. But, as mentioned, 
it is not redundant when the subproblems of \rPRSM are considered
as independent optimization problems. We take advantage of this in the 
corresponding $R$-subproblem and the computation of the lower bound of 
\QAPp. 

\begin{prop}
\label{prop:trRredund}
	The constraint $\trace (R) = n+1$ is redundant in \cref{eq:main}, i.e.,
	$Y={\hV} R{\hV}^T$, $R\succeq0$
	and $Y\in\mathcal{Y}$ yields that $\trace (R) = n+1$.
\end{prop}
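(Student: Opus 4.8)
The plan is to exploit the facial-reduction identity $Y = \hV R \hV^T$ together with the two redundancies already recorded in \Cref{prop:redundant_from_Zhao}, namely $\bodiag(Y) = I_n$ and $\oodiag(Y) = I_n$. The key observation is that $\trace(R)$ can be extracted linearly from $Y$ by testing against the matrix $\hV \hV^T$. Indeed, since $\hV$ has orthonormal columns, $\langle \hV\hV^T, Y\rangle = \langle \hV\hV^T, \hV R \hV^T\rangle = \langle \hV^T\hV\hV^T\hV, R\rangle = \langle I, R\rangle = \trace(R)$. So it suffices to show that $\langle \hV\hV^T, Y\rangle = n+1$ for any $Y$ that is feasible for the relevant constraints (the gangster constraint, the polyhedral bounds, and $Y = \hV R\hV^T$ with $R \succeq 0$).

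First I would compute $\hV\hV^T$ explicitly using the closed form in \Cref{prop:widehatVshort} (or argue more invariantly that $\hV\hV^T$ is the orthogonal projector onto $\range(\hV) = \range(K) = \Null(K)^{\perp}$, hence is independent of the particular orthonormal basis chosen). Next I would split $\langle \hV\hV^T, Y\rangle$ according to the block structure of $Y$ in \cref{eq:blocked}: the $(00)$-entry contributes via the top-left block of $\hV\hV^T$, the $\bar y$ part contributes via the first row/column, and the $\bY$ part contributes via the lower-right block, which is $(V_e V_e^T)\otimes(V_e V_e^T)$ scaled appropriately plus the rank-one correction from the $\tfrac1n e_{n^2}$ column. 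The contribution from $\bY$ can then be re-expressed through $\bodiag(Y)$ and $\oodiag(Y)$: sums of diagonal blocks of $\bY$ produce $\bodiag(Y) = I_n$, and traces of blocks of $\bY$ produce $\oodiag(Y) = I_n$. Using $Y_{00}=1$ (forced by $\GG_{\bar J}(Y)=u_0$) and the identity from \Cref{prop:redundant_from_Zhao}\ref{item:arrow_redundant} that the first column of $Y$ equals its diagonal, every piece evaluates to an explicit constant.

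Carrying out the arithmetic, I expect the diagonal-block term to contribute $\trace(\bodiag(Y)) = n$, the off-diagonal/arrow terms to contribute the remaining $1$, and all cross terms involving the $\tfrac1n e_{n^2}$ row to cancel against each other or against the $(00)$-block thanks to $Y_{00}=1$ and $\bodiag(Y)=\oodiag(Y)=I_n$; summing gives $\trace(R) = \langle \hV\hV^T, Y\rangle = n+1$.

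The main obstacle will be the bookkeeping in the middle step: correctly expanding $(V_e\otimes V_e)(V_e\otimes V_e)^T = (V_eV_e^T)\otimes(V_eV_e^T)$, keeping track of the off-diagonal $-\tfrac1n$ entries of $V_eV_e^T$, and matching each resulting pattern of entries of $\bY$ to the appropriate linear functional ($\bodiag$, $\oodiag$, or the arrow constraint) so that the reductions of \Cref{prop:redundant_from_Zhao} can be applied. Once the decomposition of $\hV\hV^T$ into these recognizable pieces is set up cleanly, the evaluation is a routine substitution of the constants $I_n$, $Y_{00}=1$, and $\bar y = \diag(\bY)$.
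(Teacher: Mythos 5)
Your proposal is correct in substance but takes a far more roundabout route than necessary, and the roundabout part is exactly the part you leave unexecuted. The identity you set up at the start, $\trace(R)=\langle \hV\hV^T,Y\rangle$ via $\hV^T\hV=I$, already finishes the proof once you notice that $\hV\hV^T$ acts as the identity on $Y=\hV R\hV^T$, so $\langle \hV\hV^T,Y\rangle=\trace(\hV\hV^T Y)=\trace(Y)$; equivalently, cyclicity of the trace gives $\trace(R)=\trace(R\,\hV^T\hV)=\trace(\hV R\hV^T)=\trace(Y)$ with no reference to the explicit form of $\hV\hV^T$ at all. That is the paper's argument: $\bodiag(Y)=I_n$ from \Cref{prop:redundant_from_Zhao} gives $\trace(\bY)=\trace(\bodiag(Y))=n$, and $Y_{00}=1$ then gives $\trace(Y)=n+1=\trace(R)$. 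By contrast, your plan to expand $\hV\hV^T$ blockwise (the $(V_eV_e^T)\otimes(V_eV_e^T)$ term plus the rank-one corrections) and match each piece against $\bodiag$, $\oodiag$, and the arrow identity would eventually yield the same number, but it drags in machinery that the short argument never uses ($\oodiag(Y)=I_n$ and the first-column-equals-diagonal identity play no role), it ties the proof to a particular representation of $\hV$ unless you pass to the abstract projector, and the sentence beginning ``carrying out the arithmetic, I expect\dots'' is precisely where all of the work lives and is not actually done. One slip worth correcting if you do pursue the explicit route: $\range(\hV)$ is the \emph{null space} of the exposing matrix $K$ (equivalently $\range(K)^{\perp}$, since $K\succeq 0$), not $\range(K)$ as written --- the statement of \Cref{prop:widehatVshort} has a typo here, and projecting onto the wrong one of these two complementary subspaces would derail the blockwise computation.
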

\begin{proof}
By \Cref{prop:redundant_from_Zhao}, $\bodiag(Y) =I_n$ hold.
Then with $Y_{00}=1$, we see that $\trace (Y) = n+1$.
By cyclicity of the trace operator and $\whV^T \whV = I$, we see that 
\[
\trace (R)=\trace (R)\hV^T\hV=\trace \left(\hV R\hV^T\right)=\trace (Y)=n+1. 
\qedhere
\] 
\end{proof}

\begin{remark}
	Note that we could add more redundant constraints to (\DNNp). For 
	example:
	\begin{equation*}
	\label{eq:mainequiv}
	\begin{array}{rcl}
	\textdef{$p^*_{\DNN}$}:=& \min\limits_{R,Y} &  \langle L_Q,Y \rangle\\
	&\text{s.t.} & Y = \widehat{V} R \widehat{V}^T\\
	&& \GG_{\bar J}(Y) = u_0 \\
	&& \GG_{\bar J}({\hV} R {\hV}) = u_0 \\
	&& R\succeq0\\
	&& 0\leq Y\leq 1.
	\end{array}
	\end{equation*}
	We could also add redundant constraints to the sets $\RR,\YY$ that are
	not necessarily redundant in the subproblems below, thus strengthening 
	the splitting approach.
	For example, we could use the so-called $\arrow,\bodiag,\oodiag$ 
	constraints that are defined and shown redundant 
	in~\cite{KaReWoZh:94}. 
	Moreover, 
from \Cref{item:bdiag_odiag_redundant} of 
	\Cref{prop:redundant_from_Zhao}, 
	$\Mat\left( \diag(\bY) \right)$ is doubly stochastic for a feasible 
	$Y$ to the model \eqref{eq:facred3}, where $\Mat$ is the adjoint of 
	the $\vec$ operator.
	Hence one may include an additional redundant constraint to the model 
	\eqref{eq:facred3}.
	Moreover, we could strengthen the relaxation by
	restricting each row/column (ignoring the first row/column) to be a
	multiple of a vectorized doubly stochastic matrix.
\end{remark} 

\subsection{Optimality Conditions for Main Model}
\label{sect:augmlagroptcond}
We now derive the main splitting model. We
define the cone and polyhedral constraints, respectively, as
\index{polyhedral constraints, $\YY$}
\index{$\YY$, polyhedral constraints}
\index{cone constraints, $\RR$}
\index{$\RR$, cone constraints}
\begin{equation}
\label{def:R}
\RR:= \left\{R\in\mathbb{S}^{(n-1)^2+1}: R\succeq0,\ 
        \trace  (R) = n+1 \right\},
\end{equation}
and 
\begin{equation}
\label{def:Y}
	\YY:= \left\{Y\in\mathbb{S}^{n^2+1}:\GG_{\bar J}(Y) =u_0,\ 0\leq
	Y\leq 1 \right\}.
\end{equation}
Replacing the constraints in \Cref{eq:facred3} with \Cref{def:R}
and \Cref{def:Y}, we obtain the following \DNN relaxation 
that we solve using \rPRSMp:
\index{\DNN relaxation of \QAP}
\begin{equation}
\label{eq:main}
(\DNN) \qquad
\begin{array}{rcl}
\textdef{$p^*_{\DNN}$}:=& \min\limits_{R,Y} &  \langle L_Q,Y \rangle\\
&\text{s.t.} & Y = \widehat{V} R \widehat{V}^T\\
&& R\in\RR\\
&& Y\in \YY.
\end{array}
\end{equation}
The Lagrangian function of model 
\Cref{eq:main} is:
\begin{equation*}
\LL (R, Y,Z) = \langle L_{Q}, Y \rangle  + \langle Z,Y - {\hV} R 
{\hV}^T \rangle.
\end{equation*}
\index{Lagrangian, $\LL$}
\index{$\LL$, Lagrangian}
The first order optimality conditions for the model \cref{eq:main} are:
\begin{subequations}\label{eq:optcond}
	\begin{alignat}{5}
	0&\in  -\widehat{V}^TZ\widehat{V}+\NN_{\RR}(R),
            &\quad \text{(dual $R$ feasibility)} \label{eq:DNNa} \\
	0&\in  L_Q+Z+\NN_{\YY}(Y),
            &\quad \text{(dual $Y$ feasibility)} 
\label{eq:DNNb}\\
	Y & =   \widehat{V}R\widehat{V}^T,
              \quad R \in \RR, \,  Y \in \YY,
            & \quad \text{(primal feasibility)} \label{eq:DNNc}
	\end{alignat}
\end{subequations} 
where the set $\NN_{\RR}(R)$ (resp. $\NN_{\YY}(Y)$) is the normal cone to
the set $\RR$ (resp. $\YY$) at $R$ (resp. $Y$).  
\index{$\NN_{C}(\bar x)$, normal cone of $C$ at $\bar x$}
\index{normal cone of $C$ at $\bar x$, $\NN_{C}(\bar x)$} 
By the definition of the normal cone, we can easily obtain the following 
\Cref{thm:charactoptMAIN}.
\begin{prop}[characterization of optimality for \cref{eq:main}]
\label{thm:charactoptMAIN}
The primal-dual $R,Y,Z$ are optimal for  \cref{eq:main} if, and only if,
\cref{eq:optcond} holds if, and only if,
\begin{subequations}\label{eq:optcondnew}
	\begin{alignat}{4}
	R &=  \mathcal{P}_\RR (R+\widehat{V}^TZ\widehat{V}) \label{eq:optcondnew_inR}  	\\
	Y& = \mathcal{P}_\YY (Y- L_Q - Z) \label{eq:optcondnew_inY}\\
	Y & =   \widehat{V}R\widehat{V}^T. \label{eq:optcondnew_pR}
	\end{alignat} 
\end{subequations} 
\end{prop}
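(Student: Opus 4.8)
The plan is to prove the two stated equivalences in turn: optimality $\Longleftrightarrow$ the KKT system \cref{eq:optcond}, and then \cref{eq:optcond} $\Longleftrightarrow$ the fixed-point system \cref{eq:optcondnew}.

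For the first equivalence, observe that \cref{eq:main} is a convex program: linear objective $\langle L_Q,Y\rangle$, affine coupling constraint $Y=\hV R\hV^T$, and nonempty closed convex (in fact compact) feasible sets $\RR$, a spectrahedron cut out by a trace normalization, and $\YY$, a polytope inside the box $0\le Y\le 1$. The strictly feasible pair $(\hat R,\hat Y)$ of \cref{eq:facred3}, taken from \cite{KaReWoZh:94}, satisfies the coupling equation with $\hat R\in\relint\RR$ (since $\hat R\succ0$) and $\hat Y\in\relint\YY$ (since $0<\hat Y_{\bar J^c}<1$), so a constraint qualification holds and the KKT conditions are both necessary and sufficient for optimality. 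Forming the Lagrangian $\LL(R,Y,Z)$ with the free multiplier $Z\in\mathbb{S}^{n^2+1}$ for the equality coupling constraint and retaining $R\in\RR$, $Y\in\YY$ as the single set constraint $(R,Y)\in\RR\times\YY$, stationarity reads $0\in\nabla_{(R,Y)}\LL + \NN_{\RR\times\YY}(R,Y)$; since $\LL$ is linear in $(R,Y)$ with $\nabla_R\LL=-\hV^TZ\hV$, $\nabla_Y\LL=L_Q+Z$, and $\NN_{\RR\times\YY}(R,Y)=\NN_\RR(R)\times\NN_\YY(Y)$, this splits into \cref{eq:DNNa} and \cref{eq:DNNb}; primal feasibility \cref{eq:DNNc} merely restates the constraints. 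Hence $(R,Y)$ is optimal with dual multiplier $Z$ if and only if \cref{eq:optcond} holds.

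For the second equivalence I would use the elementary projection characterization of the normal cone: for a nonempty closed convex set $C$ in a Euclidean space, one has $x\in C$ and $w\in\NN_C(x)$ if and only if $x=\PP_C(x+w)$. Applying this with $C=\RR$, $x=R$, $w=\hV^TZ\hV$ rewrites \cref{eq:DNNa} together with $R\in\RR$ as \cref{eq:optcondnew_inR}; applying it with $C=\YY$, $x=Y$, $w=-L_Q-Z$ rewrites \cref{eq:DNNb} together with $Y\in\YY$ as \cref{eq:optcondnew_inY}. Because a projection automatically lands in its target set, \cref{eq:optcondnew_inR,eq:optcondnew_inY} already force $R\in\RR$ and $Y\in\YY$, so the only remaining content of \cref{eq:DNNc} is the coupling equation, i.e.\ \cref{eq:optcondnew_pR}. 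Combining the two equivalences gives the proposition.

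This result is essentially a transcription of standard convex-optimality facts, so there is no substantial obstacle; the only points deserving care are (i) being explicit that a constraint qualification actually holds — this is exactly where the strict feasibility of $(\hat R,\hat Y)$ is invoked, via $\hat R\in\relint\RR$ and $\hat Y\in\relint\YY$ — so that KKT is sufficient and not merely necessary, and (ii) noting that we take normal cones of the product set $\RR\times\YY$ rather than of an intersection, which is what makes the componentwise split in \cref{eq:optcond} legitimate without any further regularity assumption.
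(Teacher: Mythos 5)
Your proof is correct and is exactly the argument the paper has in mind: the paper states only that the proposition follows ``by the definition of the normal cone,'' and your two steps --- KKT necessity/sufficiency for the convex program \cref{eq:main} under the strict feasibility of $(\hat R,\hat Y)$, followed by the standard identity that $x\in C$ and $w\in\NN_C(x)$ iff $x=\cP_C(x+w)$ --- supply precisely the omitted details. No discrepancy with the paper's route.
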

We use \Cref{eq:optcondnew} as one of the stopping criteria of the \rPRSM 
in our numerical experiments. 

As in all optimization, the dual multiplier, 
here $Z$, is 
essential in finding
an optimal solution.
We now present properties on $Z$ that are exploited in our
algorithm in \Cref{sect:PRSM}.
\Cref{thm:LQzero} shows that there exists a dual multiplier $Z\in 
\mathbb{S}^{n^2+1}$ of the model \Cref{eq:main} that, except for the $(0,0)$-th entry, 
has a known diagonal, first column and first row. This allows for 
faster convergence in the algorithm in \Cref{sect:PRSM}.
\begin{theorem}
\label{thm:LQzero}
Let $(R^*,Y^*)$ be an optimal pair for~\Cref{eq:main}, and let
\[
\textdef{$\ZZ_A$} :=\left\{Z\in\mathbb{S}^{n^2+1}: Z_{i,i} = -(L_Q)_{i,i}, 
\ Z_{0,i}=Z_{i,0}= -(L_Q)_{0,i} , \ i=1,\ldots ,n^2 \right\}.
\]
Then there exists $Z^*\in\ZZ_A$ such that $(R^*,Y^*,Z^*)$ 
solves \cref{eq:optcond}.	
\end{theorem}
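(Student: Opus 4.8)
The plan is to start from any optimal triple $(R^*, Y^*, \bar Z)$ guaranteed by strong duality (Slater holds for \cref{eq:facred3} via the strictly feasible pair $(\hat R, \hat Y)$, so the optimality conditions \cref{eq:optcond} are solvable). The goal is to modify $\bar Z$ into some $Z^* \in \ZZ_A$ without destroying any of the three conditions in \cref{eq:optcond}. The key observation is that only \cref{eq:DNNa} and \cref{eq:DNNb} involve $Z$, and \cref{eq:DNNa} constrains $Z$ only through the compression $\hV^T Z \hV$. So the plan is to look for a perturbation $Z^* = \bar Z + \Delta$ where $\Delta$ is supported on the diagonal, first row, and first column (the entries that $\ZZ_A$ pins down), chosen so that (i) $\hV^T \Delta \hV = 0$, which leaves \cref{eq:DNNa} untouched, and (ii) $\Delta$ has the right values to force $Z^*_{ii} = -(L_Q)_{ii}$ and $Z^*_{0i} = Z^*_{i0} = -(L_Q)_{0i}$.

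First I would identify what freedom is available. From \cref{eq:DNNb}, $Z^*$ is determined by $Y^*$ only up to the normal cone $\NN_{\YY}(Y^*)$: we need $-(L_Q + Z^*) \in \NN_{\YY}(Y^*)$. Since $\YY$ is defined by the gangster equations $\GG_{\bar J}(Y) = u_0$ together with the box $0 \le Y \le 1$, its normal cone at $Y^*$ contains the range of $\GG_{\bar J}^*$ (the span of the symmetric matrices $E_{ij} + E_{ji}$ for $(i,j) \in \bar J$, which includes $E_{00}$) plus sign-constrained contributions from active box constraints. The gangster set $\bar J$ contains the $(00)$ position, so we have complete freedom in the $(0,0)$ entry of $Z^*$. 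The entries of $Z^*$ on the diagonal of $\bY$ and in the off-diagonal blocks' diagonals, and the first row/column entries — I would check which of these indices lie in $\bar J$ (the off-diagonal entries of diagonal blocks and diagonal entries of off-diagonal blocks do, but the \emph{diagonal} of $\bY$ and the first row/column of $Y$ do not, in general). For the indices \emph{not} in $\bar J$, I must instead use that $Y^*$ lies on the boundary of the box: by \Cref{prop:redundant_from_Zhao}, $\bodiag(Y^*) = I_n$ and $\oodiag(Y^*) = I_n$ and the first column of $Y^*$ equals its diagonal, forcing many diagonal and first-row/column entries of $Y^*$ to be $0$ or $1$, hence box-active, hence contributing a sign-definite direction to $\NN_{\YY}(Y^*)$ that we can exploit.

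The technical heart is the following. Consider changing $\bar Z$ only in the $(0,0)$ entry, the rest of the first row/column, and the diagonal. For the $(00)$ entry: since $E_{00} \in \range \GG_{\bar J}^*$, adding any multiple of $E_{00}$ to $Z^*$ keeps \cref{eq:DNNb} valid, and it has no effect on $\hV^T Z^* \hV$ precisely when\ldots actually one must be careful since $\hV$ has a nonzero $(0,0)$ block; so instead I would argue directly. The cleanest route: define the candidate $Z^* := \bar Z + D$ where $D$ is the unique matrix supported on (diagonal $\cup$ first row $\cup$ first column) with $D_{ii} = -(L_Q)_{ii} - \bar Z_{ii}$ and $D_{0i} = D_{i0} = -(L_Q)_{0i} - \bar Z_{0i}$. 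Then $Z^* \in \ZZ_A$ by construction. I then need two verifications. (1) $\hV^T Z^* \hV = \hV^T \bar Z \hV$, i.e.\ $\hV^T D \hV = 0$: here I would use the explicit structure of $\hV$ from \Cref{prop:widehatVshort} together with $Y^* = \hV R^* \hV^T$ and the redundant identities of \Cref{prop:redundant_from_Zhao} to show that $D$ acts as zero on $\range(\hV)$ — intuitively because $D$ is ``diagonal-plus-arrow'' and the arrow and diagonal constraints are redundant on the face $\FF$, an exposing-vector argument shows $D$ annihilates $\range(\hV)$. (2) $-(L_Q + Z^*) \in \NN_{\YY}(Y^*)$: since $-(L_Q + \bar Z) \in \NN_{\YY}(Y^*)$ already and the perturbation $-D$ changes only entries that are either in $\bar J$ (free, since $\range \GG_{\bar J}^* \subseteq \NN_{\YY}(Y^*)$) or box-active at the correct sign (using $\bodiag(Y^*) = \oodiag(Y^*) = I_n$ and the first-column-equals-diagonal identity to pin $Y^*$ entries to $0$ or $1$), the perturbed vector stays in the normal cone.

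The main obstacle I anticipate is step (1) — showing $\hV^T D \hV = 0$, equivalently that the correction matrix $D$ lies in the orthogonal complement of the face $\FF = \hV \Sp^{(n-1)^2+1} \hV^T$. This is really a statement that the ``arrow + diagonal'' perturbation directions are exactly the degrees of freedom killed by facial reduction, which is morally the content of the redundancy results in \cite{KaReWoZh:94} but needs to be assembled carefully here; in particular one must confirm that $D$ as defined (with entries dictated by $L_Q$ and $\bar Z$) actually lands in that complement rather than merely that \emph{some} perturbation does. A secondary subtlety is bookkeeping the sign conditions in $\NN_{\YY}(Y^*)$ for the box constraints — making sure that decreasing versus increasing a given entry of $Z^*$ is compatible with whether the corresponding $Y^*$ entry sits at its lower bound $0$ or upper bound $1$; the identities $\bodiag(Y^*) = I$, $\oodiag(Y^*) = I$, and first-column $=$ diagonal are exactly what make this work out, so I would invoke \Cref{prop:redundant_from_Zhao} explicitly at that point.
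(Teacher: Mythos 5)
Your step (2) is essentially sound: for each diagonal or arrow entry, the value $-(L_Q)_{ij}$ is always contained in the (closed) interval that the normal cone $\NN_{\YY}(Y^*)$ permits for $Z_{ij}$ --- it is forced when the box constraint is inactive and is the endpoint of the admissible ray when the entry sits at $0$ or $1$ --- so moving those entries of $\bar Z$ to their $\ZZ_A$ values preserves \cref{eq:DNNb}. The genuine gap is step (1). Your correction matrix $D$ has every entry pinned down by $\bar Z$ and $L_Q$; there is no remaining freedom with which to arrange $\hV^T D\hV=0$, and no reason it should hold. The underlying intuition --- that ``diagonal-plus-arrow directions are killed by facial reduction'' --- is false at the level of individual entries: $\hV^T E_{ii}\hV=(\hV^Te_i)(\hV^Te_i)^T\neq 0$ because $\hV$ in \Cref{prop:widehatVshort} has no zero rows. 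What the redundancy results of \cite{KaReWoZh:94} actually give is that certain \emph{linear combinations} (images of $\arrow^*$, $\BoDiag^*$, $\OoDiag^*$) lie in $\FF^{\perp}+\range\GG_{\bar J}^*$; even granting that your $D$ lies in that subspace, the $\range\GG_{\bar J}^*$ component is supported on gangster positions and is \emph{not} annihilated by $\hV^T(\cdot)\hV$, so \cref{eq:DNNa} is still perturbed. Replacing the requirement $\hV^TD\hV=0$ by the weaker ``$\hV^T\bar Z\hV+\hV^TD\hV$ stays in $\NN_{\RR}(R^*)$'' does not help, since that cone is a specific set determined by $R^*$ and again nothing ties your prescribed $D$ to it. You flag this as the main obstacle but do not close it, and I do not believe it can be closed along these lines: the set of valid multipliers need not be reachable from an arbitrary member by a perturbation supported only on the diagonal and first row/column.

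The paper avoids the difficulty by working on the primal side rather than correcting a given multiplier. It enlarges $\YY$ to $\YY_A$ by deleting exactly the box constraints on the diagonal and first row/column, proves via \Cref{prop:redundant_from_Zhao} (psd-ness plus $\bodiag(Y)=I_n$ bounds the diagonal, and the arrow identity bounds the first row/column) that the relaxed problem \cref{eq:main-D} is \emph{equivalent} to \cref{eq:main}, and then takes a multiplier $Z^*$ for the relaxed problem. Because those coordinates of $Y\in\YY_A$ are now unconstrained, the variational inequality $\langle Y-Y^*,L_Q+Z^*\rangle\ge 0$ forces $(L_Q+Z^*)$ to vanish there, i.e.\ $Z^*\in\ZZ_A$ \emph{automatically}; finally $\NN_{\YY_A}(Y^*)\subseteq\NN_{\YY}(Y^*)$ transfers $Z^*$ back to \cref{eq:optcond}. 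No comparison of two multipliers, and no control of $\hV^TD\hV$, is ever needed. If you want to salvage your outline, the fix is precisely this reversal: construct the multiplier from a tight relaxation in which the target identities become stationarity conditions, rather than post-processing an arbitrary one.
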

\begin{proof}
We define \index{$\YY_A$}$\YY_A:= \left\{Y\in\mathbb{S}^{n^2+1}:\GG_J(Y) = 
E_{00},\ 0\leq E_A \circ Y\leq 1 \right\}$, where
$E_A = \left[ \begin{array}{c|c}
1 & 0 \\ \hline 0 & E_{n^2} - I_{n^2}
\end{array} \right]$.
Namely, $\YY_A$ consists of the elements of $\YY$ after removing the polyhedral constraints on the diagonal and the first row and column.
Consider the following problem:
\begin{equation}\label{eq:main-D}
\min_{R,Y} \{ \langle L_Q, Y\rangle : Y= \widehat{V}R\widehat{V}^{T}, \ R \in \RR, \ Y \in \YY_A \}.
\end{equation}
Clearly, every feasible solution of \Cref{eq:main} is feasible for 
\Cref{eq:main-D}.
Consider a feasible pair $(R,Y)$ to \Cref{eq:main-D}.
By \Cref{item:bdiag_odiag_redundant}
of \Cref{prop:redundant_from_Zhao} and the 
positive semidefiniteness of $Y=  \widehat{V}R\widehat{V}^{T}$,
the elements of the diagonal of $Y$ are in the interval $[0,1]$. 
In addition, by \Cref{item:arrow_redundant} of \Cref{prop:redundant_from_Zhao},
the elements of the first row 
and 
column of $Y$  are also in the interval $[0,1]$.
Thus we conclude that $Y \in \YY$ and 
\Cref{eq:main} and \Cref{eq:main-D} are equivalent.

Let $(R^*,Y^*)$ be a pair of optimal solution to \Cref{eq:main-D}.
Hence, there exists a $Z^*$  that satisfies the following characterization
of optimality:
\begin{subequations}
\label{eq:DNND}
\begin{alignat}{4}
	0 & \in -\widehat{V}^T Z^*{\hV} +\NN_{\RR}(R^*), 
\label{eq:DNN_Da} \\
	0 & \in L_Q + Z^*+\NN_{\YY_A}(Y^*), 
\label{eq:DNN_Db} \\
	Y^*& = \widehat{V}R^*\widehat{V}^T, \quad R^*\in \RR,\,
Y^* \in \YY_A. \label{eq:DNN_Dc}
\end{alignat}
\end{subequations} 
By the definition of the normal cone, we have
\[
0  \in L_Q + Z^*+\NN_{\YY_A}(Y^*)
\iff
\langle Y-Y^*,L_Q + Z^* \rangle \ge 0 , \ \forall\, Y\in \YY_A.
\]
Since the diagonal and the first column and row of $Y\in \YY_A$ except for the first element are unconstrained, we see that 
\[
(E_{n^2+1}-E_A) \circ (Z^*+L_Q) = 0,
\]
which implies that 
\[
Z_{ii}=-(L_Q)_{i,i},Z_{0,i}=Z_{i,0}=-(L_Q)_{0,i},\; i=1,\ldots,n^2, \ \text{ i.e., } Z^*\in\ZZ_A .
\]

In order to complete the proof, it suffices to show that the triple 
$(R^*,Y^*, Z^*)$	also solves~\cref{eq:optcond}.
We note that \Cref{eq:DNN_Da} and \Cref{eq:DNN_Dc} imply that 
\cref{eq:DNNa} and \cref{eq:DNNc} hold with $(R^*,Y^*, Z^*)$ in the place 
of $(R,Y,Z)$. 
In addition, since $Y^*\in 	\YY\subseteq \YY_A$, we see that
$\NN_{\YY_A}(Y^*)\subseteq\NN_{\YY}(Y^*)$. This together with 
\cref{eq:DNN_Db} shows that \cref{eq:DNNb} holds with $(Y^*, Z^*)$ in the place of $(Y,Z)$. 
Thus, we have shown that $(R^*,Y^*, Z^*)$ also 
solves~\Cref{eq:optcond}.
\end{proof}

\section{The \rPRSM Algorithm }
\label{sect:PRSM}
We now present the details of a modification of 
the so-called restricted contractive Peaceman-Rachford splitting method,
\PRSMp, or symmetric \ADMMp, e.g.,~\cite{doi:10.1137/13090849X,MR3359677}. 
Our modification involves redundant constraints on subproblems as well
as on the update of dual variables.

\subsection{Outline and Convergence for \rPRSMp}
The augmented Lagrangian function for \Cref{eq:main} with 
Lagrange multiplier $Z$ is: 
\index{$L_Q \leftarrow 
\widehat{V} \widehat{V}^T   L_Q \widehat{V} \widehat{V}^T$}
\index{augmented Lagrangian, $\LL_A$}
\index{$\LL_A$, augmented Lagrangian}
\begin{equation}
\label{eq:augmented_lagrangian}
\LL_A (R, Y,Z) = \langle L_{Q}, Y \rangle  + \langle Z,Y - {\hV} R 
{\hV}^T \rangle + \frac{\beta}{2} \norm{Y - {\hV} R \widehat 
V^T}^{2}_F,
\end{equation}
where $\beta$ is a positive penalty parameter.

\index{$\cP_{\ZZ_0}$, projection onto $\ZZ_{0}$}
\index{projection onto $\ZZ_{0}$, $\cP_{\ZZ_0}$}
Define $\ZZ_{0} := \{Z\in\mathbb{S}^{n^2+1}: Z_{i,i} =0, 
\ Z_{0,i}=Z_{i,0}= 0 , \ i=1,\ldots ,n^2 \}$ and let
$\cP_{\ZZ_0}$ be the projection onto the set $\ZZ_{0}$. 
Our proposed algorithm reads as follows:

%

\begin{algorithm}[h!]  
\caption{\rPRSM for \DNN in \cref{eq:main}}
\label{algo:PRS_algorithm} 
\begin{algorithmic}
\STATE \textbf{Initialize:} 
$\LL_A$ augmented Lagrangian in \Cref{eq:augmented_lagrangian}; 
\textdef{$\gamma \in(0,1)$, under-relaxation parameter}; 
\textdef{$\beta\in (0,\infty)$, penalty parameter};
$\RR$,$\YY$ from \Cref{def:R}; $Y^0$; and $Z^0\in \ZZ_A$; 
\WHILE {tolerances not met}
\STATE $R^{k+1} = \argmin\limits_{R \in\RR} \LL_A (R,Y^k,Z^k)$
\STATE $Z^{k+\frac{1}{2}} = Z^k + \gamma  \beta \cdot \cP_{\ZZ_0}\left( 
Y^{k} - \widehat{V} R^{k+1} \widehat{V}^T \right)$
\STATE $Y^{k+1} = \argmin\limits_{Y\in \YY} \LL_A (R^{k+1},Y,Z^{k+\frac{1}{2}}) $
\STATE $Z^{k+1} = Z^{k+\frac{1}{2}} + \gamma  \beta \cdot 
\cP_{\ZZ_0}\left( Y^{k+1}- \widehat{V} R^{k+1} \widehat{V}^T \right)$ 
\ENDWHILE
\end{algorithmic}
\end{algorithm}
\noindent
\begin{remark}
\Cref{algo:PRS_algorithm} can be summarized as
follows: alternate minimization of variables $R$  and $Y$ interlaced by 
the dual variable $Z$ update.
Before discussing the convergence of \Cref{algo:PRS_algorithm}, we
point out the following.
The $R$-update and the $Y$-update in
\Cref{algo:PRS_algorithm} are well-defined, i.e.,~the subproblems 
involved have unique solutions. 
This follows from the strong convexity of $\LL_A$ with respect to 
$R,Y$ and the convexity and compactness of the sets $\RR$ and $\YY$.
We also note that, in \Cref{algo:PRS_algorithm}, we update the dual 
variable $Z$ both after the $R$-update and the $Y$-update. 

This pattern of update in our Algorithm~\ref{algo:PRS_algorithm} is 
closely related to the strictly contractive \textdef{Peaceman-Rachford 
splitting 
method, 
\PRSMp}; see 
e.g.,~\cite{doi:10.1137/13090849X,MR3359677}. Indeed, we show in 
\Cref{thm:seqcnvg} below, that our algorithm can be 
viewed as a version of \textit{semi-proximal strictly contractive
\PRSMp}, see e.g.,~\cite{2015arXiv150602221G,MR3359677}, 
applied to \Cref{eq:DNNfinalre-simple}. 
Hence, the convergence of our
algorithm can be deduced from the general convergence theory of
semi-proximal strictly contractive \PRSMp.
\index{semi-proximal strictly contractive \PRSMp, semi-proximal \rPRSMp}
\index{\PRSMp, Peaceman-Rachford splitting method}
\index{\rPRSMp, restricted Peaceman-Rachford splitting method}
\end{remark}
\begin{theorem}
\label{thm:seqcnvg}
Let $\{R^k\}, \{Y^k\}, \{Z^k\}$ be the sequences generated by
Algorithm~\ref{algo:PRS_algorithm}. Then the sequence $\{(R^k,Y^k)\}$ converges to a
primal optimal pair $(R^*,Y^*)$ of~\Cref{eq:main}, and $\{Z^k\}$ 
converges to an optimal dual solution $Z^*\in {\mathcal Z}_A$.
\end{theorem}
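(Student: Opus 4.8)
The plan is to recognize \Cref{algo:PRS_algorithm} as a concrete instance of the semi-proximal strictly contractive Peaceman--Rachford splitting method of \cite{2015arXiv150602221G,MR3359677} applied to the reformulation \Cref{eq:DNNfinalre-simple} of \Cref{eq:main}, and then to quote the convergence theorem for that method. Three ingredients must be assembled: (i) \Cref{eq:DNNfinalre-simple} is equivalent to \Cref{eq:main}, with the same optimal value and primal--dual solutions; (ii) the iteration in \Cref{algo:PRS_algorithm} is, line for line, semi-proximal strictly contractive \PRSM for that reformulation, with $\gamma\in(0,1)$ in the role of the under-relaxation parameter; and (iii) the standing hypotheses of the convergence theorem hold --- closed proper convexity of the two blocks, a nonempty \KKT set, and the compactness/strong convexity that makes both subproblems well posed.

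For (i) I would combine the reductions of \Cref{sect:model}. By \Cref{thm:LQzero} the coupling $Y=\widehat V R\widehat V^T$ may be replaced by the projected constraint $\cP_{\ZZ_0}(Y-\widehat V R\widehat V^T)=0$ provided the multiplier is confined to the affine set $\ZZ_A=Z^0+\ZZ_0$: the diagonal and the first row and column of $Y$ and of $\widehat V R\widehat V^T$ are then forced to agree by the arrow, $\bodiag$ and $\oodiag$ identities of \Cref{prop:redundant_from_Zhao} together with $R\succeq 0$ and $Y\in\YY$, so nothing is lost. By \Cref{prop:trRredund} the constraint $\trace(R)=n+1$ is redundant, and replacing $L_Q$ by the compression $\widehat V\widehat V^TL_Q\widehat V\widehat V^T$ does not change the objective on the feasible set. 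The outcome is \Cref{eq:DNNfinalre-simple}, a two-block convex program $\min\{\theta_1(R)+\theta_2(Y)\,:\,\mathcal{A}R+\mathcal{B}Y=0,\ R\in\RR,\ Y\in\YY\}$ with $\theta_1\equiv 0$, $\theta_2(Y)=\langle L_Q,Y\rangle$, $\mathcal{A}R=-\cP_{\ZZ_0}(\widehat V R\widehat V^T)$ and $\mathcal{B}Y=\cP_{\ZZ_0}(Y)$, whose \KKT points are precisely the triples $(R^*,Y^*,Z^*)$ solving \Cref{eq:optcond} with $Z^*\in\ZZ_A$. Such a triple exists: the Slater condition recorded after \Cref{eq:sdp1} provides a primal--dual optimal pair and \Cref{thm:LQzero} lifts the multiplier into $\ZZ_A$.

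For (ii) I would write out the generic semi-proximal strictly contractive \PRSM step for the data of (i) and match it with \Cref{algo:PRS_algorithm}. Minimizing the $R$-block over $\RR$, the identity $\widehat V^T\widehat V=I$ makes $R\mapsto\widehat V R\widehat V^T$ an isometry, so the relevant quadratic in $R$ is nondegenerate, no proximal term is needed, and $R^{k+1}=\cP_{\RR}\!\big(\widehat V^T Y^k \widehat V+\tfrac{1}{\beta}\widehat V^T Z^k \widehat V\big)$, which is the first line of the algorithm. The two dual lines are the symmetric ascent half-steps with common stepsize $\gamma\beta$; since $Z^0\in\ZZ_A$ and every increment lies in $\ZZ_0$, we get $Z^k\in\ZZ_A$ for all $k$, so the limit also lies in the closed set $\ZZ_A$. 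In the $Y$-block the coupling operator $\mathcal{B}=\cP_{\ZZ_0}$ is a projection, hence singular, which forces a semi-proximal regularizer; the identity $\|Y-\widehat V R^{k+1}\widehat V^T\|_F^2=\|\cP_{\ZZ_0}(Y-\widehat V R^{k+1}\widehat V^T)\|_F^2+\|\cP_{\ZZ_0^{\perp}}(Y-\widehat V R^{k+1}\widehat V^T)\|_F^2$ splits the augmented term of \Cref{eq:augmented_lagrangian} into the genuine penalty $\tfrac{\beta}{2}\|\cP_{\ZZ_0}(\cdot)\|_F^2$ of the reduced constraint plus a semi-proximal term governed by $S=\beta\,\cP_{\ZZ_0^{\perp}}\succeq 0$, and one verifies this term is anchored at $Y^k$ using that its $\ZZ_0^{\perp}$-component decouples and, since $Z^{k+\frac12}\in\ZZ_A$ cancels the linear terms there, reduces to a box projection of $\cP_{\ZZ_0^{\perp}}(\widehat V R^{k+1}\widehat V^T)$. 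Once the correspondence is in place, the hypotheses of (iii) --- $\gamma\in(0,1)$, closed convex $\theta_1,\theta_2$, compact $\RR,\YY$ (hence uniquely solvable subproblems, as noted after \Cref{algo:PRS_algorithm}), and a nonempty \KKT set --- are exactly those of the convergence theorem in \cite{2015arXiv150602221G,MR3359677}; it then yields $(R^k,Y^k)\to(R^*,Y^*)$ optimal for \Cref{eq:DNNfinalre-simple}, hence for \Cref{eq:main}, and $Z^k\to Z^*\in\ZZ_A$.

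The crux is step (ii): identifying \Cref{eq:DNNfinalre-simple} precisely enough that the leftover term $\tfrac{\beta}{2}\|\cP_{\ZZ_0^{\perp}}(Y-\widehat V R^{k+1}\widehat V^T)\|_F^2$ is a legitimate semi-proximal regularizer for the $Y$-block --- verifying the anchoring just described and that $S=\beta\,\cP_{\ZZ_0^{\perp}}$ meets the positivity and compatibility conditions of the cited theorem --- and bookkeeping the two dual half-steps so that the potential-function estimate matches the strictly contractive variant rather than plain \ADMMp. By comparison, ingredient (i) and the observation $Z^k\in\ZZ_A$ are routine given \Cref{thm:LQzero}, \Cref{prop:redundant_from_Zhao} and \Cref{prop:trRredund}.
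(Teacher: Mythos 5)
Your overall strategy coincides with the paper's: view \Cref{algo:PRS_algorithm} as a semi-proximal strictly contractive \PRSM applied to the reformulation \Cref{eq:DNNfinalre-simple} and invoke the convergence theory of \cite{2015arXiv150602221G,MR3359677}, then match the iterations. But the step you yourself identify as the crux --- treating the leftover term $\tfrac{\beta}{2}\|\cP_{\ZZ_0^c}(Y-\widehat V R^{k+1}\widehat V^T)\|_F^2$ as a semi-proximal regularizer for the $Y$-block ``anchored at $Y^k$'' --- does not work as stated. A semi-proximal term must have the form $\tfrac12\|Y-Y^k\|_S^2$, centred at the previous iterate of the \emph{same} block; the term in question is centred at $\widehat V R^{k+1}\widehat V^T$, the freshly computed iterate of the \emph{other} block, so it does not fit the template of the cited theorem and no choice of $S$ repairs this. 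The paper resolves the mismatch differently on both counts. First, the semi-proximal term is placed in the \emph{$R$}-subproblem, as $\tfrac{\beta}{2}\|\cP_{\ZZ_0^c}(\widehat V R\widehat V^T-\widehat V\tilde R^k\widehat V^T)\|_F^2$ anchored at $\tilde R^k$; this is also what makes that subproblem strongly convex, since the genuine penalty $\tfrac{\beta}{2}\|\cP_{\ZZ_0}(\tilde Y^k-\widehat V R\widehat V^T)\|_F^2$ alone need not be (the map $R\mapsto\cP_{\ZZ_0}(\widehat V R\widehat V^T)$ need not be injective), contrary to your remark that no proximal term is needed because $R\mapsto\widehat V R\widehat V^T$ is an isometry. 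Second, in \Cref{eq:DNNfinalre-simple} the $\ZZ_0^c$-component of $Y$ does not appear in the $Y$-subproblem at all, so $\cP_{\ZZ_0^c}(\tilde Y^{k+1})$ is undetermined and is \emph{chosen} to equal $\cP_{\ZZ_0^c}(\widehat V\tilde R^{k+1}\widehat V^T)$ (the paper's \Cref{eq:def-Y}); the induction in the paper's Step~2 then checks that the $Y$-update of \Cref{algo:PRS_algorithm}, with its full quadratic, reproduces exactly this choice.

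The same device is needed to make your ingredient (i) rigorous. The relaxed coupling $\cP_{\ZZ_0}(Y)=\cP_{\ZZ_0}(\widehat V R\widehat V^T)$ does \emph{not} by itself force the diagonal and first row and column of $Y$ and of $\widehat V R\widehat V^T$ to agree: \Cref{prop:redundant_from_Zhao} takes $Y=VRV^T$ as a hypothesis, which is precisely what you are trying to recover, so the ``nothing is lost'' claim is circular as written. It is the completion rule \Cref{eq:def-Y}, carried to the limit, that yields $\cP_{\ZZ_0^c}(Y^*)=\cP_{\ZZ_0^c}(\widehat V R^*\widehat V^T)$ and hence $Y^*=\widehat V R^*\widehat V^T$, after which $Z^*\in\ZZ_A$ converts the \KKT conditions of \Cref{eq:DNNfinalre-simple} into \Cref{eq:optcond}. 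In short, the architecture of your argument is the paper's, and the observation $Z^k\in\ZZ_A$ and the use of \Cref{thm:LQzero} are on target; the modified objective in \Cref{eq:DNNfinalre-simple}, the placement of the semi-proximal term in the $R$-block, and the explicit rule for $\cP_{\ZZ_0^c}(\tilde Y^{k+1})$ are the three missing pieces.
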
 
\begin{proof}
The proof is divided into two steps. In the first step,
we consider the convergence of the semi-proximal restricted 
contractive \PRSM in~\cite{2015arXiv150602221G, 
	MR3359677}
applied to the following problem 
\Cref{eq:DNNfinalre-simple}, where $\mathcal{P}_{\ZZ_0^c}$ is the projection onto the orthogonal complement of $\ZZ_{0}$, i.e., $ \mathcal{P}_{\ZZ_0^c} = I-\mathcal{P}_{\ZZ_0}$:
	\begin{equation}\label{eq:DNNfinalre-simple}
	\begin{array}{rcl}
	&\min\limits_{R,Y} & \langle L_Q, 
	\PP_{\ZZ_0}(Y)+\PP_{\ZZ_0^c}(VRV^T)\rangle \\
	&{\rm s.t.} & \PP_{\ZZ_0}(Y)= 
	\PP_{\ZZ_0} (\widehat{V}R\widehat{V}^{T}) \\
	&    & R \in \RR\\
	&    & Y \in \YY.
	\end{array}
	\end{equation}	
We show that the sequence generated by the semi-proximal restricted 
contractive \PRSM in 
	\cite{2015arXiv150602221G,MR3359677} converges to a
\textdef{Karush-Kuhn-Tucker, \KKT} point of \Cref{eq:main}.
In the second step,
we show that the sequence generated by Algorithm~\ref{algo:PRS_algorithm} 
is identical with the sequence generated by the semi-proximal restricted 
contractive \PRSM applied 
to \Cref{eq:DNNfinalre-simple}. 
\index{\KKTp, Karush-Kuhn-Tucker}

\paragraph{Step 1:} 
We apply the semi-proximal strictly contractive \PRSM given in 
\cite{2015arXiv150602221G,MR3359677} 
to~\Cref{eq:DNNfinalre-simple}.
Let $(\tilde R^0,\tilde Y^0,\tilde Z^0):= (R^0,Y^0,Z^0)$, where $R^0$ and 
$Y^0$ are chosen to satisfy~\Cref{eq:main} and $Z^0\in {\mathcal Z}_A$. 
Consider the following update:
{\small
	\begin{equation}\label{eq:semi-prox-PRSM}  
\hspace{-0.1cm}  
\begin{array}{l}
	\tilde R^{k+1}  =
	\argmin\limits\limits_{R\in \RR}  \langle L_Q, 
	\PP_{\ZZ_0^c}(\widehat{V}R\widehat V^T)\rangle\!-\! 
	\langle \tilde Z^k, 
	\mathcal{P}_{\ZZ_0}(\widehat{V}R \widehat{V}^T) \rangle \!+\! 	
	\frac{\beta}{2} \norm{ \mathcal{P}_{\ZZ_0}\!(\tilde 
	Y^k - \widehat{V}R \widehat{V}^T\!)}_F^2\!\!\!+\!\!
 \frac{\beta}{2}\norm{\mathcal{P}_{\ZZ_0^c}(\widehat{V} 
 R\widehat{V}^T \!-\!\widehat{V}\tilde R^k\widehat{V}^T)}_F^2,\\
	\tilde Z^{k+\frac12} = \tilde Z^k + \gamma 
	\beta\mathcal{P}_{\ZZ_0}(\tilde Y^k - \widehat{V} \tilde R^{k+1} 
	\widehat{V}^T ),\\
	\tilde Y^{k+1}  \in
	\argmin\limits\limits_{Y\in\YY} \ \langle L_Q,\PP_{\ZZ_0}(Y)\rangle  + 
	\langle 
	\tilde 
	Z^{k+\frac12}, 
	\mathcal{P}_{\ZZ_0}(Y) \rangle +
	\frac{\beta}{2} \norm{\mathcal{P}_{\ZZ_0}( Y-\widehat{V}\tilde 
	R^{k+1} \widehat{V}^T)}_F^2,\\
	\tilde Z^{k+1} = \tilde Z^{k+\frac12} + \gamma \beta 
	\mathcal{P}_{\ZZ_0}(\tilde Y^{k+1} - \widehat{V} \tilde R^{k+1} 
	\widehat{V}^T),
	\end{array}
	\end{equation}}
\!\!where $\gamma \in (0,1)$ is an under-relaxation parameter.
	Note that the $R$-update in~\Cref{eq:semi-prox-PRSM} is well-defined 
	because	the subproblem involved is a strongly convex 
	problem. By completing the square in the $Y$-subproblem, we have that 
	\[
\tilde Y^{k+1}  
\in \argmin\limits\limits_{Y\in\YY}  
\left\|\cP_{\ZZ_0} (Y) 
- 
  \left(  \cP_{\ZZ_0} ( 
  \widehat{V} \tilde R^{k+1} \widehat{V}^T ) -\frac{1}{\beta} ( L_Q +\tilde 
  Z^{k+\frac12})  \right) \right\|_F^2.
  \]
  We note that ${\cal P}_{{\mathcal Z}_0}(\tilde Y^{k+1})$ is uniquely 
  determined with 
  \[
  {\cal P}_{{\mathcal Z}_0}(\tilde Y^{k+1})
  = \cP_{\ZZ_0} (  \widehat{V} \tilde R^{k+1} \widehat{V}^T ) 
  -\frac{1}{\beta} ( L_Q +\tilde Z^{k+\frac12}) ,
  \]
  while ${\cal P}_{{\mathcal Z}_0^c}(\tilde Y^{k+1})$ can be chosen to be 
  \begin{equation}\label{eq:def-Y}
  \mathcal{P}_{\ZZ^c_0}(\tilde Y^{k+1})
  = \mathcal{P}_{\ZZ_0^c}(\widehat{V}\tilde 
  R^{k+1}\widehat{V}^T) \ , \ \ \ \ \forall\, k\ge 0.
  \end{equation}
  Finally, one can also deduce by induction that $\tilde Z^k\in {\mathcal 
  Z}_A$, for all $k$, since $Z^0\in {\mathcal Z}_A$.
  From the general convergence theory of semi-proximal strictly 
  contractive \PRSM given in~\cite{2015arXiv150602221G,MR3359677}, we have 
  \[
  \left( \tilde R^k, \ \tilde Y^k, \ \tilde Z^k \right)
  \to \Big( R^*  , Y^*,  Z^* \Big) \in \RR \times \YY \times 
  {\mathcal Z}_A,
  \]
  where the convergence of $\{\tilde{R}^k\}$ follows from the injectivity 
  of the map $R\mapsto \widehat{V}R\widehat{V}^T$. Thus, the 
  triple
  $(R^*,Y^*,Z^*)$  solves the optimality condition for 
  \Cref{eq:DNNfinalre-simple}, ~i.e., 
  \begin{subequations}
  \label{eq:temp_oc}
  \begin{alignat}{4}
  &0  \in \widehat V^T\PP_{\ZZ_0^c}(L_Q)\widehat V -\widehat V^T
  \PP_{\ZZ_0}(Z^*)\widehat V+\NN_{\RR}(R^*)
  	\label{eq:temp_oca} \\
  	&0 \in \PP_{\ZZ_0}(L_Q)+\PP_{\ZZ_0}(Z^*)+\NN_{\YY}(Y^*)
  \label{eq:temp_ocb} \\
  &	\PP_{\ZZ_0}(Y^*) = \PP_{\ZZ_0}(\widehat VR^*\widehat V^T). 
  \label{eq:temp_occ}
  \end{alignat}
  \end{subequations}
  Since we update $\PP_{\ZZ_0^c}(\tilde{Y}^k)$ by \Cref{eq:def-Y}, we also have that 
  \begin{equation}\label{eq:temp_ocd}
  \PP_{\ZZ_0^c}(Y^*)=\PP_{\ZZ_0^c}(\widehat V R^*\widehat V^T). 
  \end{equation}
  Next we show that the triple $(R^*,Y^*,Z^*)$ is also a \KKT point of 
  model 
  \Cref{eq:main}. Firstly, It follows from \Cref{eq:temp_occ} and 
  \Cref{eq:temp_ocd}
  that 
  \[
  Y^*=\widehat V R^*\widehat V^T.
  \]
  Secondly, we can deduce from \Cref{eq:temp_oca}, \Cref{eq:temp_ocb} and 
  $Z^*\in\ZZ_A$ that 
  \[
  0\in -\widehat V^TZ^*\widehat V+\NN_\RR(R^*)\quad\hbox{and}\quad
  0\in L_Q+Z^*+\NN_\YY(Y^*).
  \]
  Hence, we have shown that the sequence generated by 
  by~\Cref{eq:semi-prox-PRSM} and~\Cref{eq:def-Y},  converges to a \KKT 
  point 
  of the model \Cref{eq:main}.

\paragraph{Step 2:} 
We now claim that the sequence $\{(\tilde R^k,\tilde 
Z^{k-\frac12},\tilde{Y}^k,\tilde Z^k)\}$ generated 
by~\Cref{eq:semi-prox-PRSM} and~\Cref{eq:def-Y}, starting 	from $(\tilde 
R^0,\tilde Y^0,\tilde Z^0):= (R^0,Y^0,Z^0)$, is identical to the sequence 
$\{(R^k,Z^{k-\frac12},Y^k,Z^k)\}$ given by 	
Algorithm~\ref{algo:PRS_algorithm}.
We prove by induction. 
First, we clearly have $(\tilde R^0,\tilde Y^0,\tilde Z^0)= (R^0,Y^0,Z^0)$ by the definition. 
Suppose that $(\tilde R^k,\tilde Y^k,\tilde Z^k)= (R^k,Y^k,Z^k)$ for some $k\ge 0$.
Since $\tilde{Z}^k \in \ZZ_A$ and~\Cref{eq:def-Y} holds, we can rewrite 
the $R$-subproblem in~\Cref{eq:semi-prox-PRSM} as follows:
{\small	\[
\begin{array}{rl}
&\argmin\limits\limits_{R\in\RR} \langle L_Q, 
\PP_{\ZZ_0^c}(\widehat{V}R\widehat V^T)\rangle- 
\langle \tilde Z^k\!, 
\mathcal{P}_{\ZZ_0}\!(\widehat{V}R \widehat{V}^T\!) \rangle \! + \!	
\frac{\beta}{2} \norm{ \mathcal{P}_{\ZZ_0}\!(\tilde 
	Y^k\! - \!\widehat{V}R \widehat{V}^T\!)}_F^2\!+
\frac{\beta}{2}\norm{\mathcal{P}_{\ZZ_0^c}(\widehat{V}\tilde 
	R^k\widehat{V}^T \!-\!\widehat{V}R\widehat{V}^T)}_F^2\\
=&\argmin\limits\limits_{R\in\RR}\langle \PP_{\ZZ_0^c}(L_Q)-\PP_{\ZZ_0}(\tilde 
Z^{k}),\widehat 
VR\widehat V^T 
\rangle+\!	
\frac{\beta}{2} \norm{ \mathcal{P}_{\ZZ_0}\!(\tilde 
	Y^k\! - \!\widehat{V}R \widehat{V}^T\!)}_F^2 \! \!
+\!
\frac{\beta}{2}\norm{\mathcal{P}_{\ZZ_0^c}(\widehat{V}\tilde 
	R^k\widehat{V}^T \!-\!\widehat{V}R\widehat{V}^T)}_F^2\\
=&\argmin\limits\limits_{R\in\RR}\langle -\PP_{\ZZ_0^c}(\tilde 
Z^k)  - 
\PP_{\ZZ_0}(\tilde Z^k), 
\widehat{V}R 
\widehat{V}^T 
\rangle +\frac{\beta}{2} \norm{ \tilde Y^k-\widehat{V}R 
	\widehat{V}^T}_F ^2\\
=&\argmin\limits\limits_{R\in\RR}   - \langle \tilde Z^k, \widehat{V}R 
\widehat{V}^T 
	\rangle +\frac{\beta}{2} \norm{ \tilde Y^k-\widehat{V}R 
	\widehat{V}^T}_F ^2,
\end{array}
\]}where the second ``$=$'' is due to $\tilde Z^k\in\ZZ_A$ and \Cref{eq:def-Y}.
The above is equivalent to the $R$-subproblem in 
\Cref{algo:PRS_algorithm}, since $\tilde Z^k = Z^k$ and $\tilde Y^k = Y^k$ 
by the induction hypothesis. 
This shows that $\tilde R^{k+1} = R^{k+1}$ and it follows that $\tilde 
Z^{k+\frac12} = Z^{k+\frac12}$.
Since $Z^{k+\frac12}\in {\mathcal Z}_A$,
we can rewrite the $Y$-subproblem in \Cref{algo:PRS_algorithm} as
{\small\[
\hspace{-0.2cm}\begin{array}{cl}
	&\argmin\limits\limits_{Y\in\YY} \langle L_Q+ 
	Z^{k+\frac12},Y\rangle + \frac\beta{2}\|Y - 
	{\hV}	R^{k+1}{\hV}^T\|_F^2 \\
	=&\argmin\limits\limits_{Y\in\YY} \langle 
	\PP_{\ZZ_0}(L_Q+Z^{k+\frac12}), Y\rangle+\frac\beta{2}\|{\cal 
	P}_{{\mathcal 
		Z}_0}(Y - 
{\hV}	R^{k+1}{\hV}^T)\|_F^2 + \frac\beta{2}\|{\cal P}_{{\mathcal 
		Z}_0^c}(Y - {\hV} R^{k+1}{\hV}^T)\|_F^2 \\
	=&\argmin\limits\limits_{Y\in\YY}\langle 
	L_Q,\PP_{\ZZ_0}(Y)\rangle  + 
	\langle  
	Z^{k+\frac12},
	\mathcal{P}_{\ZZ_0}(Y) \rangle +
	\frac{\beta}{2} \norm{\mathcal{P}_{\ZZ_0}( Y-\widehat{V} 
		R^{k+1} \widehat{V}^T)}_F^2+ \frac\beta{2}\|{\cal P}_{{\mathcal 
			Z}_0^c}(Y - {\hV} R^{k+1}{\hV}^T)\|_F^2,\\
 \end{array}
\]}where the first ``$=$'' is due to $Z^{k+\frac12}\in\ZZ_A$.
Hence, with $\tilde R^{k+1} = R^{k+1}$ and $\tilde 	Z^{k+\frac12} = 
Z^{k+\frac12}$, we have that the above subproblem generates 
$\tilde{Y}^{k+1}$
defined in~\Cref{eq:semi-prox-PRSM} and~\Cref{eq:def-Y}. 
Thus we have $\tilde Y^{k+1} = Y^{k+1}$ and it follows that $\tilde Z^{k+1} = Z^{k+1}$ holds. 
This completes the proof for 
$\{(R^k,Y^k,Z^k)\}_{k\in \mathbb{N}} \equiv \{(\tilde R^k,\tilde 	
Y^k,\tilde Z^k)\}_{k\in \mathbb{N}}$, and the alleged convergence behavior 
of $\{(R^k,Y^k,Z^k)\}$ follows from that of $\{(\tilde R^k,\tilde 
Y^k,\tilde Z^k)\}$. 
\end{proof}

\subsection{Implementation details}

Note that the explicit $Z$-updates in \Cref{algo:PRS_algorithm} is simple 
and easy.
We now show that we have explicit expressions for $R$-updates and 
$Y$-updates as well.

\subsubsection{\mathinhead{R}{R}-subproblem}\label{sec:R}

In this section we present the formula for solving the $R$-subproblem in 
\Cref{algo:PRS_algorithm}.
We define $\mathcal{P}_{\RR}(W)$ to be the projection of $W$ onto the compact set $\RR$, where $\RR := \left\{R\in \mathbb{S}_+^{(n-1)^2+1} :  \trace  (R)  = n+1 \right\}$.
By completing the square at the current iterates $Y^{k},Z^{k}$,
the $R$-subproblem can be explicitly solved by the projection operator $\mathcal{P}_{\RR}$ as follows:
\[
\begin{array}{cl}
R^{k+1}
& = \argmin\limits\limits_{R\in\RR}
-\langle Z^k,{\hV}R{\hV}^T \rangle
+ \frac {\beta}2   \norm{ Y^k-\widehat{V}R\widehat{V}^T}_F^2
  \\ & = \argmin\limits\limits_{R\in\RR} \frac{\beta}{2}\norm{
  Y^k-\widehat{V}R\widehat{V}^T + \frac{1}{\beta} Z^k}_F^2  \\
  & = \argmin\limits\limits_{R\in\RR } \frac{\beta}{2}\norm{ R- \widehat{V}^T
  ( Y^k+\frac{1}{\beta}  Z^k  ) \widehat{V} }_F^2   \\
  & =\mathcal{P}_{\RR}(\widehat{V}^T(Y^k+\frac{1}{\beta} Z^k) \widehat{V}),
  \end{array}
  \]
  where the third equality follows from  the assumption 
  $\widehat{V}^T\widehat{V}=I$.

For a given symmetric matrix $W \in \mathbb{S}^{(n-1)^2+1}$, we now show how to perform the projection $\mathcal{P}_{\RR}(W) $.
Using the eigenvalue decomposition $W = U \Lambda U^T$, 
we have 
\begin{equation*}
\mathcal{P}_{\RR}(W)=U\Diag(\mathcal{P}_{\Delta}(\diag(\Lambda)))U^T,
\end{equation*}
where $\mathcal{P}_{\Delta}(\diag(\Lambda))$ denotes the projection of $\diag(\Lambda)$ onto the simplex 
\[
\Delta =\left\{\lambda\in \mathbb{R}^{(n-1)^2+1}_+ : \lambda^Te=n+1 \right\}.
\]
Projections onto simplices can be performed efficiently via some standard 
root-finding strategies; see, for example~\cite{par,chen2011projectionOA}.
Therefore the $R$-updates reduce to the projection of the vector 
of the  positive eigenvalues of $ {\hV}^T \left( Y^k + \frac{1}{\beta} Z^k 
\right) {\hV}$ onto the simplex $\Delta$.

\subsubsection{\mathinhead{Y}{Y}-subproblem}\label{sec:Y}
In this section we present the formula for solving the $Y$-subproblem in 
\Cref{algo:PRS_algorithm}. By completing the square at the
current iterates $R^{k+1},Z^{k+\frac 12}$, we get 
\begin{equation*}
\label{eq:Yupdate}
\begin{array}{cl}
Y^{k+1} & =\argmin\limits\limits_{Y\in\YY} \langle L_Q, Y\rangle+\langle  
Z^{k+\frac{1}{2}},
Y-\widehat{V} R^{k+1}\widehat{V}^T\rangle
+\frac{\beta}{2}\norm{Y-\widehat{V}R^{k+1}\widehat{V}^T}_F^2\\
&= \argmin\limits\limits_{Y\in\YY} \frac{\beta}{2}\norm{ Y- 
\left(\widehat{V}R^{k+1}
	\widehat{V}^T - \frac{1}{\beta}( L_Q+ Z^{k+\frac{1}{2}})\right) }_F^2.
\end{array}
\end{equation*}
Hence the $Y$-subproblem involves the projection onto the polyhedral set 
\index{polyhedral constraints, $\YY$}
\index{$\YY$, polyhedral constraints}
\[
\YY:=\{Y\in\mathbb{S}^{n^2+1}:\GG_{\bar J}(Y) = u_0, 0\leq Y\leq 1\}.
\]
Let
$T := \left(\widehat{V}R^{k+1} \widehat{V}^T 
           - \frac{1}{\beta}( L_Q+ Z^{k+\frac{1}{2}})\right)$.
Then we update $Y^{k+1}$ as follows:
\[
(Y^{k+1})_{ij}=
\left\{
\begin{array}{ll}
1 & \text{if  } i=j=0,\\
0 & \text{if  } ij\;\hbox{or }\;ji\in \bar{J}/{(00)},\\
\min\left\{ 1,\max\{(\widehat V R^{k+1}\widehat V^T)_{ij},0\} \right\} & 
\text{if  } i=j\neq0\;\hbox{or 
}\;ij=0\neq i+j,\\
\min \left\{ 1,\max\{T_{ij},0\} \right\}  & \hbox{otherwise.}
\end{array}
\right.
\]

\subsection{Bounding}
\label{sec:bounding}

\index{global optimal value of \QAPp, $p^*_{\QAPp}$}
\index{$p^*_{\QAPp}$, global optimal value of \QAPp}
In this section we present some strategies for obtaining lower and upper bounds for $p^*_{\QAPp}$.

\subsubsection{Lower Bound from Relaxation}
\label{sec:lowerbound}

Exact solutions of the relaxation \Cref{eq:main} provide lower bounds to 
the original \QAP \Cref{eq:qap}.
However, the size of problem \Cref{eq:main} can be extremely large, and it 
could be very expensive to obtain solutions of high accuracy.
In this section we present an inexpensive way to obtain a valid lower bound
using the output with moderate accuracy from our algorithm.

\index{$g$, dual functional}
\index{dual functional, $g$}
Our approach is based on the following functional
\begin{equation}\label{eq:g}
g(Z):=\min_{Y\in\YY}\left\langle L_Q+Z ,Y\right\rangle -(n+1) 
\lambda_{\max}({\hV}^TZ{\hV}),
\end{equation}
where \textdef{$\lambda_{\max}(\widehat{V}^TZ\widehat{V})$}
denotes the largest eigenvalue of $\widehat{V}^T Z \widehat{V}$.

In  \Cref{thm:lowerbnd} below, we show that $\max\limits_Z g(Z)$ is indeed 
the Lagrange dual problem of our main problem \Cref{eq:main}. 
\begin{theorem}
\label{thm:lowerbnd}
Let $g$ be the functional defined in \Cref{eq:g}.
Then the problem 
\begin{equation}\label{eq:Fenchel_dual}
d^*_Z:=\max_{Z}g(Z)
\end{equation}
is a concave maximization problem.
Furthermore, strong duality holds for the problem  \Cref{eq:main} and  
\Cref{eq:Fenchel_dual}, i.e.,
\[
p^*_{\bf DNN} = d^*_Z,\ \mbox{and $d^*_Z$ is attained.}
\]
\end{theorem}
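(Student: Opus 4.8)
The plan is to derive the functional $g(Z)$ directly as the Lagrangian dual of \Cref{eq:main} and then invoke Slater's condition to get strong duality with attainment. First I would form the Lagrangian by dualizing only the coupling constraint $Y = \widehat V R \widehat V^T$ with multiplier $Z$, keeping $R \in \RR$ and $Y \in \YY$ as explicit constraints: this gives the dual function
\[
h(Z) \;=\; \min_{R \in \RR,\, Y \in \YY} \; \langle L_Q, Y\rangle + \langle Z, Y - \widehat V R \widehat V^T\rangle \;=\; \min_{Y \in \YY}\langle L_Q + Z, Y\rangle \;-\; \max_{R \in \RR}\langle \widehat V^T Z \widehat V, R\rangle.
\]
The first term is exactly the linear minimization over the polytope $\YY$ appearing in \Cref{eq:g}. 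For the second term I would use the description $\RR = \{R \succeq 0 : \trace(R) = n+1\}$: maximizing a linear functional $\langle M, R\rangle$ over the scaled spectrahedron $\{R \succeq 0, \trace R = n+1\}$ equals $(n+1)\lambda_{\max}(M)$, since the extreme points of that set are $(n+1) q q^T$ for unit vectors $q$, and the maximum of $q^T M q$ is $\lambda_{\max}(M)$. Applying this with $M = \widehat V^T Z \widehat V$ yields $\max_{R\in\RR}\langle \widehat V^T Z\widehat V, R\rangle = (n+1)\lambda_{\max}(\widehat V^T Z \widehat V)$, so $h(Z) = g(Z)$ exactly. Concavity of $g$ is then immediate: it is the sum of a pointwise infimum of affine functions of $Z$ (hence concave) and $-(n+1)\lambda_{\max}(\widehat V^T Z \widehat V)$, which is concave because $\lambda_{\max}$ is convex and the map $Z \mapsto \widehat V^T Z \widehat V$ is linear.

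For strong duality and attainment I would check a Slater-type constraint qualification for \Cref{eq:main}. The relevant regularity is that there be a feasible $(\hat R, \hat Y)$ with $\hat R \succ 0$ and $\hat Y$ in the relative interior of the polytope $\YY$, i.e. $0 < \hat Y_{\bar J^c} < 1$ on the free entries. This is supplied by the barycenter construction recalled right after \Cref{eq:facred3}: the pair $(\hat R, \hat Y)$ built from the barycenter $\hat Y$ of the lifted matrices satisfies $\hat R \succ 0$ and $0 < \hat Y_{\bar J^c} < 1$. Since the only conic constraint is the semidefinite one on $R$ (the box constraints on $Y$ are polyhedral and need no interior-point qualification beyond relative interiority, and the affine constraints $\GG_{\bar J}(Y) = u_0$, $Y = \widehat V R\widehat V^T$ impose no obstruction), this strict feasibility is exactly the Slater condition needed. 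Strong duality $p^*_{\DNN} = d^*_Z$ and dual attainment then follow from the standard conic/convex duality theorem (e.g. the version for problems with a mix of affine, polyhedral, and one semidefinite block). I would also note compactness of $\RR$ and $\YY$ guarantees the primal value is finite and attained, so the duality statement is non-vacuous.

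The main obstacle — really the only delicate point — is justifying $\max_{R\in\RR}\langle M, R\rangle = (n+1)\lambda_{\max}(M)$ cleanly and confirming that the constraint qualification I invoke is the correct one for the mixed affine/polyhedral/semidefinite structure here. The eigenvalue fact is elementary but should be stated carefully: writing $R = \sum_i \mu_i q_i q_i^T$ with $\mu_i \ge 0$ and $\sum_i \mu_i = n+1$, one has $\langle M, R\rangle = \sum_i \mu_i q_i^T M q_i \le (n+1)\lambda_{\max}(M)$, with equality at $R = (n+1) q q^T$ for $q$ a top eigenvector of $M$. For the constraint qualification, the safe route is to reduce \Cref{eq:main} to a problem over $R$ alone by substituting $Y = \widehat V R \widehat V^T$ (this is legitimate since $\widehat V$ has full column rank, so the map $R \mapsto \widehat V R \widehat V^T$ is injective), after which \Cref{eq:main} becomes a single SDP in $R$ with polyhedral constraints, for which Slater's condition is precisely the existence of $\hat R \succ 0$ that is feasible and maps to a relative-interior point of the box — exactly what the barycenter gives. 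Everything else is bookkeeping.
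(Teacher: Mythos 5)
Your proposal is correct and follows essentially the same route as the paper: dualize only the coupling constraint $Y=\widehat V R\widehat V^T$, evaluate the inner minimization over $\YY$ as a linear program and the one over $\RR$ via the Rayleigh principle (extreme points $(n+1)qq^T$ giving $(n+1)\lambda_{\max}(\widehat V^TZ\widehat V)$), observe concavity as an infimum of affine functions of $Z$ minus a convex eigenvalue term, and obtain strong duality with dual attainment from the strictly feasible barycenter pair $(\hat R,\hat Y)$ with $\hat R\succ 0$ and $0<\hat Y_{\bar J^c}<1$. The paper phrases the last step as a minimax exchange justified by Rockafellar's Corollary~28.2.2 and Theorem~28.4 (with attainment from Corollary~28.4.1), which is the same Slater-type regularity you invoke.
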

\begin{proof}
Note that the function $\widehat{V}^T Z \widehat{V}$ is linear in $Z$. 
Therefore
the largest eigenvalue function 
$\lambda_{\max}(\widehat{V}^TZ\widehat{V})$ is a convex function of $Z$.
Thus the function
\[
\left\langle L_Q+Z ,Y\right\rangle -(n+1) 
\lambda_{\max}({\hV}^TZ{\hV})
\]
is concave in $Z$. The concavity of $g$ is now clear.

We derive \Cref{eq:Fenchel_dual} via the Lagrange dual problem of 
\Cref{eq:main}:
\begin{subequations}
\label{eq:weakdualsZ}
\begin{alignat}{4}
	p^*_{\bf DNN}
     &=\min\limits_{R\in\RR,Y\in \YY}
		\max\limits_Z
		\left\{\left\langle L_Q,Y\right\rangle+
		\langle Z,Y-\widehat{V}R\widehat{V}^T\rangle \right\}
	     \nonumber
		\\ &=
		\label{eq:weakdualZb}
		\max\limits_Z\min\limits_{R\in\RR,Y\in\YY}
		\left\{\left\langle L_Q,Y\right\rangle+
		\langle Z,Y-\widehat{V}R\widehat{V}^T\rangle \right\}
		\\ &=
		\nonumber
		\max\limits_Z\left\{\min\limits_{Y\in\YY}\left\{\left\langle
		L_Q,Y\right\rangle+\langle Z,
		Y\rangle\right\}+\min_{R\in\RR}\langle 
		Z,-\widehat{V}R\widehat{V}^T\rangle\right\}\\
		&=\max\limits_Z\left\{\min\limits_{Y\in\YY}\left\{\left\langle
		\nonumber
		L_Q,Y\right\rangle+\langle
		Z,Y\rangle\right\}+\min_{R\in\RR}\langle
		\widehat{V}^TZ\widehat{V},-R\rangle\right\}
		\\ &=
		\label{eq:weakdualZc}
		\max\limits_{Z}\left\{\min\limits_{Y\in\YY}\left\langle 
		L_Q+Z,Y\right\rangle-(n+1)\lambda_{\max}(\widehat{V}^TZ\widehat{V})\right\} \\
		& = \nonumber d_Z^*,
		\end{alignat}
	\end{subequations}
	where:
	\vspace{-.5em}
	\begin{enumerate}
		\item
		That \Cref{eq:weakdualZb} follows 
		from~\cite[Corollary~28.2.2, Theorem~28.4]{Rocka:70}
and the fact that \Cref{eq:main} has generalized Slater 
points, see~\cite{KaReWoZh:94}.\footnote{Note that the Lagrangian 
is linear in $R,Y$ and linear in $Z$.
			Moreover, both constraint sets $\RR,\YY$ are convex and 
			compact. Therefore, the
			result also follows from the classical Von Neumann-Fan minmax 
			theorem.}
		\item
		That \Cref{eq:weakdualZc} follows from the definition of 
		$\RR$ and the
		Rayleigh Principle.
	\end{enumerate}
We see from~\cite[Corollary~28.2.2, Corollary~28.4.1]{Rocka:70}  that the 
dual optimal value $d^*_Z$ is attained.
\end{proof}

Since the Lagrange dual problem in \Cref{thm:lowerbnd}
is an unconstrained maximization problem,
evaluating $g$ defined in \Cref{eq:g} at the $k$-th iterate $Z^k$
yields a valid lower bound for $p^*_{\DNNp}$, i.e.,~$g(Z^k) \le 
p^*_{\DNNp}\le p^*_{\QAPp}$.
The functional $g$ also strengthens the bound given in 
\cite[Lemma~3.2]{OliveiraWolkXu:15}.
We also see in \Cref{eq:weakdualZc} that $Z\prec 0$
provides a positive contribution to the eigenvalue part of the lower
bound. Moreover, \Cref{thm:LQzero} implies that the contribution from
the diagonal, first row and column of $L_Q+Z$ (except for the $(0,0)$-th 
element) is zero. This motivates scaling $L_Q$ to be positive definite.
Let \textdef{$P_V$}$:= {\hV} {\hV}^T$.
Then for any $r,s\in\mathbb{R}$, the objective in \Cref{eq:main} 
can be replaced by 
\begin{equation}
\label{eq:scaled-version}
\langle r(P_VL_QP_V+s I),Y \rangle.
\end{equation}
We obtain the same solution pair $(R^*, Y^*)$ of \Cref{eq:main}.
Another advantage is that it potentially 
forces the dual multiplier $Z^*$ to be negative definite, and thus
the lower bound is larger.  
\begin{remark}
Additional strategies can be used to strengthen the lower bound $g(Z^k)$.
Suppose that the given data matrices $A,B$ are symmetric and integral, 
then from \Cref{eq:qap}, we know that $p^*_{\QAPp}$ is an even integer.
Therefore applying the ceiling operator to $g(Z^k)$ still gives a valid 
lower bound to $p^*_{\QAPp}$. According to this prior information,  we can 
strengthen the lower bound with the even number in the pair 
$ \{\ceil[\big]{g(Z^k)},\ceil[\big]{g(Z^k)}+1\}$. 

\end{remark}

\subsubsection{Upper Bound from Nearest Permutation Matrix}
\label{sec:upper_bound}

In \cite{OliveiraWolkXu:15}, the authors present two methods for obtaining 
upper bound from nearest permutation matrices. 
In this section we present a new strategy for computing upper bounds from 
nearest permutation matrices.

Given $\bar{X} \in \R^{n\times n}$, the nearest permutation matrix $X^*$ 
from $\bar{X}$ is found by solving
\begin{equation}
\label{prob:nearest_permutation_matrix}
X^* 
\ =  \ \argmin\limits\limits_{X\in \Pi} \frac{1}{2} \|X - \bar{X}\|^2_F
\ = \ \argmin\limits\limits_{X\in \Pi} -\langle \bar{X},X \rangle.
\end{equation}
Any solution to the problem \Cref{prob:nearest_permutation_matrix} yields 
a feasible solution to the original \QAPp, which gives a valid upper bound 
$\trace (A X^* B  (X^*)^T)$.
It is well-known that the set of $n$-by-$n$ permutation matrices is the 
set of extreme points of the set of doubly stochastic matrices $\{ X \in 
\R^{n\times n} : Xe=e, \, X^{T}e=e, \, X \geq 0 \}$.\footnote{It is known 
	as Birkhoff-von Neumann theorem \cite{MR0054920,Birk:46}.}
Hence we reformulate the problem \Cref{prob:nearest_permutation_matrix} as
\begin{equation}
\label{prob:LPversion_nearest_permutation}
\max\limits\limits_{x\in \R^{n^2}} \left\{  \langle  \kvec(\bar{X}), x 
\rangle \ : \  (I_n \otimes e^T) x = e, \ (e^T \otimes I_n)x = e, \ x\ge 0 
\right\}
\end{equation}
and we solve \eqref{prob:LPversion_nearest_permutation} using simplex 
method.
Suppose that we have found an approximate optimum $Y^\text{out}$ for our 
\DNN
relaxation.
The first approach presented in \cite{OliveiraWolkXu:15} is to set 
$\kvec(\bar{X})$ to be  the second through the last elements of the first 
column of $Y^{\text{out}}$ and solve 
\Cref{prob:LPversion_nearest_permutation}.
Now suppose that we further obtain the spectral decomposition of the 
approximate optimum 
\[
Y = \sum_{i=1}^r \lambda_i v_iv_i^T,
\]
with $\lambda_1\ge \lambda_2\ge \cdots \ge \lambda_r>0$.
And by abuse of notation we set $v_i$ to be the vectors in $\R^{n^2}$
formed by removing the first element from $v_i$.
The second approach presented in \cite{OliveiraWolkXu:15} is 
to use $\kvec(\bar{X}) = \lambda_1 v_1$ in solving 
\Cref{prob:LPversion_nearest_permutation}, where $(\lambda_1,v_1)$ is the 
most dominant eigenpair of $Y^\text{out}$.

We now present our new approach inspired by \cite{Goemans}.
Let $\xi$ be a random vector in $\R^r$  with its components in $(0,1)$
and in decreasing order. 
We use $\xi$ to perturb the eigenvalues $\lambda_1,\ldots,\lambda_r$ for 
forming $\bar{X}$ as follows:
\begin{equation*}
\label{eqtn:forming_Xbar_random}
\kvec(\bar{X}) = \sum_{i=1}^r \xi_i \lambda_i v_i.
\end{equation*}
In each time we compute the upper bound, we use this approach 
$3\lceil\log(n)\rceil$ times to obtain a bunch of upper bounds, and then 
choose the best (smallest) as the upper bound.

\section{Numerical Experiments with \rPRSMp}
\label{sect:numeric}

We now present the numerical results for \Cref{algo:PRS_algorithm},
that we denote \rPRSM with the bounding strategies 
discussed in \Cref{sec:bounding}. The parameter setting and stopping 
criteria are introduced in \Cref{sec:para_set} below. 
The numerical experiments are divided into two sections. We use symmetric\footnote{We exclude instances that have asymmetric data matrices.} data from QAPLIP\footnote{\url{http://coral.ise.lehigh.edu/data-sets/qaplib/qaplib-problem-instances-and-solutions/}}.
In \Cref{sec:experment1}  we examine the comparative performance between \rPRSM and \cite[\ADMM]{OliveiraWolkXu:15}.
We aim to show that our proposed \rPRSM shows improvements on convergence 
rates and relative gaps as compared to \cite{OliveiraWolkXu:15}. 
In  \Cref{sec:experment2} we compare the numerical performance of \rPRSM with the two recently proposed methods \cite[C-SDP]{MR3772051} and \cite[F2-RLT2-DA]{MR4037827}, that are based on relaxations of the \QAPp.

\subsection{Parameters Setting and Stopping Criteria}
\label{sec:para_set}

\paragraph{Parameter Setting}
We scale the data $L_Q$ as presented in   
\Cref{eq:scaled-version} as follows:
\[
\begin{array}{ll}
L_1 \leftarrow  P_VL_QP_V ,  &  \\
L_2 \leftarrow  L_1 + \sigma_L  I , & \text{ where } \sigma_L : = \max\{0, -\left\lfloor \lambda_{\min} (L_Q) \right\rfloor\}+10n , \\
L_3 \leftarrow  \frac {n^2}{\alpha}L_2, & \text{ where } \alpha : = \left\lceil \|L_2\|_F\right\rceil .
\end{array}
\]
We set the penalty parameter $\beta = \frac{n}{3}$ and the
under-relaxation parameter $\gamma = 0.9$ for the dual variable update. 
We choose
\[
Y^0 = \frac{1}{n!}\sum_{X\in \Pi} (1;\kvec(X))(1;\kvec(X))^T \ \text{ and } \ Z^0 = \mathcal{P}_{\ZZ_A}(0)
\]
to be the initial iterates\footnote{The formula for $Y^0$ is introduced in \cite[Theorem 3.1]{KaReWoZh:94}.} for \rPRSMp.
We compute the lower and upper bounds every 100 iterations.

\paragraph{Stopping Criteria}
We terminate \rPRSM when either of the following conditions is satisfied. 
\begin{enumerate}
	\item 
	Maximum number of iterations, denoted by ``maxiter'' is achieved. 
	We set  $\text{maxiter} = 40000$.
	\item 
	For given tolerance $\epsilon$, the following bound on the primal and 
	dual residuals holds for $m_t$ sequential times:
	\[
	\max \left\{ \frac{\|Y^k- \widehat{V}R^k \widehat{V}^T\|_F}{\|Y^k\|_F} 
	, \beta \|Y^{k}-Y^{k-1}\|_F \right\} <\epsilon.
	\]
	We set $\epsilon = 10^{-5}$ and $m_t = 100$.
	\item Let $\{l_1,\ldots,l_k\}$ and $\{u_1,\ldots,u_k\}$ be sequences 
	of 
	lower 
	and upper bounds from \Cref{sec:lowerbound} and 
	\Cref{sec:upper_bound}, 
	respectively.  The lower (resp. upper) bounds do not change for $m_l$ 
	(resp. $m_u$) 
	sequential times. We 
	set 
	$m_l = m_u = 100$.
	\item The
	\KKT conditions given in \Cref{eq:optcondnew} are satisfied to a 
	certain 
	precision. 
	More specifically, for a predefined tolerance $\delta>0$, it holds 
	that 
	\[
	\max \left\{ 
	\| R^k -\mathcal{P}_\RR (R^k +\widehat{V}^TZ^k\widehat{V}) \|_F    , \ 
	\| Y^k\! - \mathcal{P}_\YY (Y^k\!- L_Q - Z^k) \|_F, \ 
	\| Y^k\! - \hV R^k\hV^T \|_F
	\right\} < \delta.
	\] 
	We use this stopping criterion for instances with $n$ larger than $20$ and we set the tolerance $\delta=10^{-5}$ when it is used.
\end{enumerate}

\subsection{Empirical Results}
\label{sec:experment1}

In this section we examine the comparative performance of \rPRSM and \cite[\ADMM]{OliveiraWolkXu:15} by using instances from QAPLIB.
We split the instances into three groups based on sizes: 
\[
n \in  \{ 10,\ldots,20\},  
\{ 21,\ldots,40\},  
\{ 41,\ldots,64\}.  
\]
For each group of specific size, we aim to show that our 
proposed \rPRSM shows improvements on convergence and relative gaps from 
\ADMM in 
\cite{OliveiraWolkXu:15}.
We used the parameters for \ADMM as suggested in \cite{OliveiraWolkXu:15}, i.e., $\beta=n/3, \gamma =1.618$.   
We adopt the same stopping criteria for \ADMM as \rPRSM for a proper comparison.
All instances in \Cref{table:QAPLIB_I,table:QAPLIB_II,table:QAPLIB_III} 
are tested using MATLAB version 2018b on a Dell PowerEdge M630 with two Intel Xeon E5-2637v3 4-core 3.5 GHz (Haswell) with	64 Gigabyte memory.

Below we give some illustrations for the headers in 
\Cref{table:QAPLIB_I,table:QAPLIB_II,table:QAPLIB_III}.
\vspace{-0.5em}
\begin{enumerate}[itemsep=-1mm]
	\item {\bf problem}: instance name;
	\item {\bf opt}: global optimal value of each instance. If the optimal 
	value is unknown, instance name is marked with the asterisk $^*$;
	\item {\bf lbd}: the lower bound obtained by running \rPRSMp;
	\item {\bf ubd}: the upper bound obtained by running \rPRSMp;
	\item {\bf rel.gap}: relative gap of each instance using \rPRSMp, 
	where 
	\begin{equation}
	\label{def:rel_gap}
	\text{relative gap} := 2\ \frac{\text{best feasible upper 
	bound}-\text{best lower bound}}{\ \text{best feasible upper 
	bound}+\text{best lower bound}+1} ;
	\end{equation}
	\item {\bf rel.gap$^\ADMMp$}: relative gap of each instance using~\cite[\ADMM]{OliveiraWolkXu:15} with the tolerance 	$\epsilon = 10^{-5}$;
	\item {\bf iter}: number of iterations used by \rPRSM with tolerance 
	$\epsilon = 10^{-5}$;
	\item {\bf iter$^\ADMMp$}: number of iterations used by~\cite[\ADMM]{OliveiraWolkXu:15} with tolerance $\epsilon = 10^{-5}$;
	\item {\bf time(sec)}: CPU time (in seconds) used by \rPRSMp.
\end{enumerate}


\subsubsection{Small Size}
\label{sec:QAPLIB-Small}

\Cref{table:QAPLIB_I} contains results on 45 QAPLIB instances
with sizes $n \in \{10\ldots,20\}$. 
\begin{table}[h!]
	\scriptsize
	\centering
	\caption{QAPLIB Instances of Small Size}
	\label{table:QAPLIB_I}
	\begin{tabular}{|l|cccccccc|}
		\hline    
		\rule{-3pt}{8pt}
		\textbf{problem}&\textbf{opt}& \textbf{lbd} & \textbf{ubd} & 
		\textbf{rel.gap} & \textbf{rel.gap$^{\ADMM}$} & \textbf{iter} & 
		\textbf{iter$^{\ADMM}$} & \textbf{time(sec)}  \\
		\hline
chr12a & 9552 & 9548 & 9552 & 0.04 & 0.02 & 11500 & 11300 & 39.30 \\  
chr12b & 9742 & 9742 & 9742 & \textbf{0} & 0 & 10300 & 10600 & 34.75 \\  
chr12c & 11156 & 11156 & 11156 & \textbf{0} & 0 & 1600 & 1700 & 5.38 \\  
chr15a & 9896 & 9896 & 9896 & \textbf{0} & 0 & 8400 & 8800 & 61.00 \\  
chr15b & 7990 & 7990 & 7990 & \textbf{0} & 0 & 4300 & 4000 & 32.78 \\  
chr15c & 9504 & 9504 & 9504 & \textbf{0} & 0 & 2200 & 2200 & 16.44 \\  
chr18a & 11098 & 11098 & 11098 & \textbf{0} & 0 & 3000 & 2500 & 42.04 \\  
chr18b & 1534 & 1534 & 1724 & \textbf{11.66} & 65.60 & 5947 & 3937 & 95.05 \\  
chr20a & 2192 & 2192 & 2192 & \textbf{0} & 0 & 6100 & 5900 & 133.59 \\  
chr20b & 2298 & 2298 & 2298 & \textbf{0} & 0 & 1900 & 3700 & 47.03 \\  
chr20c & 14142 & 14128 & 14142 & 0.10 & 0.02 & 17000 & 39800 & 365.76 \\  
els19 & 17212548 & 17189708 & 17212548 & 0.13 & 0.02 & 21500 & 26000 & 378.69 \\  
esc16a & 68 & 64 & 76 & \textbf{17.02} & 43.64 & 412 & 1176 & 3.70 \\  
esc16b & 292 & 290 & 292 & \textbf{0.69} & 2.72 & 284 & 424 & 2.56 \\  
esc16c & 160 & 154 & 176 & \textbf{13.29} & 32.52 & 397 & 923 & 3.55 \\  
esc16d & 16 & 14 & 16 & \textbf{12.90} & 92 & 280 & 1785 & 2.51 \\  
esc16e & 28 & 28 & 28 & \textbf{0} & 38.24 & 241 & 2237 & 2.41 \\  
esc16g & 26 & 26 & 36 & \textbf{31.75} & 45.45 & 252 & 3401 & 2.23 \\  
esc16h & 996 & 978 & 1100 & \textbf{11.74} & 24.82 & 1137 & 507 & 9.89 \\  
esc16i & 14 & 12 & 14 & \textbf{14.81} & 83.72 & 1445 & 9593 & 13.86 \\  
esc16j & 8 & 8 & 8 & \textbf{0} & 90.32 & 100 & 3382 & 0.98 \\  
had12 & 1652 & 1652 & 1652 & \textbf{0} & 0 & 300 & 1000 & 0.99 \\  
had14 & 2724 & 2724 & 2724 & \textbf{0} & 0 & 500 & 2100 & 3.28 \\  
had16 & 3720 & 3720 & 3720 & \textbf{0} & 0 & 600 & 2100 & 6.62 \\  
had18 & 5358 & 5358 & 5358 & \textbf{0} & 0 & 1900 & 5800 & 30.86 \\  
had20 & 6922 & 6922 & 6922 & \textbf{0} & 0.12 & 3700 & 9440 & 95.06 \\  
nug12 & 578 & 568 & 642 & \textbf{12.22} & 12.22 & 1361 & 5394 & 4.86 \\  
nug14 & 1014 & 1012 & 1022 & \textbf{0.98} & 1.08 & 2940 & 7533 & 19.05 \\  
nug15 & 1150 & 1142 & 1280 & \textbf{11.39} & 15.74 & 1582 & 6111 & 13.88 \\  
nug16a & 1610 & 1600 & 1610 & \textbf{0.62} & 0.62 & 4160 & 11200 & 43.19 \\  
nug16b & 1240 & 1220 & 1250 & \textbf{2.43} & 24.91 & 2405 & 5982 & 23.44 \\  
nug17 & 1732 & 1708 & 1756 & \textbf{2.77} & 2.77 & 3963 & 10469 & 52.89 \\  
nug18 & 1930 & 1894 & 2160 & 13.12 & 4.94 & 5588 & 10900 & 92.14 \\  
nug20 & 2570 & 2508 & 2680 & \textbf{6.63} & 17.30 & 3735 & 9356 & 99.67 \\  
rou12 & 235528 & 235528 & 235528 & \textbf{0} & 0 & 3700 & 6400 & 13.99 \\  
rou15 & 354210 & 350218 & 360702 & \textbf{2.95} & 4.89 & 1924 & 2313 & 15.99 \\  
rou20 & 725522 & 695182 & 781532 & \textbf{11.69} & 14.93 & 3953 & 3778 & 99.21 \\  
scr12 & 31410 & 31410 & 31410 & \textbf{0} & 24.75 & 400 & 1317 & 1.25 \\  
scr15 & 51140 & 51140 & 51140 & \textbf{0} & 2.67 & 800 & 1195 & 6.54 \\  
scr20 & 110030 & 106804 & 132826 & \textbf{21.72} & 35.54 & 5787 & 27800 & 135.58 \\  
tai10a & 135028 & 135028 & 135028 & \textbf{0} & 0 & 1000 & 1700 & 2.15 \\  
tai12a & 224416 & 224416 & 224416 & \textbf{0} & 0 & 300 & 500 & 0.78 \\  
tai15a & 388214 & 377102 & 403890 & \textbf{6.86} & 9.03 & 1957 & 2050 & 16.58 \\  
tai17a & 491812 & 476526 & 534328 & \textbf{11.44} & 16.25 & 2058 & 3091 & 26.62 \\  
tai20a & 703482 & 671676 & 762166 & \textbf{12.62} & 19.03 & 2114 & 2850 & 55.72 \\  
		\hline    
	\end{tabular}
\end{table}
Columns \textbf{rel.gap} and \textbf{rel.gap$^\ADMMp$} show the improvements on relative gaps on these instances.
In particular, 40 out of 45 instances show competitive relative gaps compared to \ADMM and these instances are marked with boldface in \Cref{table:QAPLIB_I}.
This is due to the improved upper bounds from the random perturbation 
approach presented in 
\Cref{sec:upper_bound}.
In fact, we now have found provably optimal solutions for the following
\underline{\emph{twenty}} instances:
{\footnotesize
	\[
	\begin{array}{llllllllllll}
	\text{chr12b} & \text{chr12c} & \text{chr15a} & \text{chr15b} & 
	\text{chr15c} & \text{chr18a} &  \text{chr20a} & \text{chr20b} & 
	\text{esc16e} & \text{esc16j} \\
	\text{had12}  & \text{had14} & \text{had16} & \text{had18} & 
	\text{had20} & \text{rou12} & \text{scr12} & \text{scr15} & 
	\text{tai10a} & \text{tai12a} .
	\end{array}
	\] }	
Comparing the column \textbf{iter} and the column \textbf{iter$^\ADMMp$}, 
we see that 37 instances were treated with fewer number of iterations 
using \rPRSM than \ADMMp. It shows that \rPRSM converges much faster than
\ADMM for the small-size QAPLIB instances. 

For \rPRSM alone we observe that most of the instances show good bounds 
with reasonable amount of time. Most of the instances are solved within 
two minutes using the machine described above. 
Our algorithm produces strong lower bounds on these instances, mostly 
within 2 percent of the optimum.

\subsubsection{Medium Size}
\label{sec:QAPLIB-Medium}

\Cref{table:QAPLIB_II} contains results on 29 QAPLIB instances with sizes $n \in \{21,\ldots, 40\}$.
We make similar observations in \Cref{sec:QAPLIB-Small}.
\begin{table}[h!]
	\scriptsize
	\centering
	\caption{QAPLIB Instances of Medium Size}
	\label{table:QAPLIB_II}
	\begin{tabular}{|l|cccccccc|}
		\hline 
		\rule{-3pt}{8pt}   
	\textbf{problem}&\textbf{opt}& \textbf{lbd} & \textbf{ubd} & 
		\textbf{rel.gap} & \textbf{rel.gap$^{\ADMM}$} & \textbf{iter} & 
		\textbf{iter$^{\ADMM}$} & \textbf{time(sec)}  \\
		\hline
chr22a & 6156 & 6156 & 6156 & \textbf{0} & 0 & 11500 & 14200 & 373.12 \\  
chr22b & 6194 & 6190 & 6194 & 0.06 & 0 & 13500 & 11500 & 467.28 \\  
chr25a & 3796 & 3796 & 3796 & \textbf{0} & 0 & 7000 & 6100 & 362.11 \\  
esc32a & 130 & 104 & 160 & \textbf{42.26} & 106.49 & 25000 & 14000 & 3956.78 \\  
esc32b & 168 & 132 & 216 & \textbf{48.14} & 95.87 & 700 & 10900 & 108.35 \\  
esc32c & 642 & 616 & 652 & \textbf{5.67} & 20.92 & 3000 & 2700 & 474.61 \\  
esc32d & 200 & 192 & 210 & \textbf{8.93} & 44.94 & 700 & 3400 & 109.00 \\  
esc32e & 2 & 2 & 24 & 162.96 & 147.37 & 600 & 8300 & 91.08 \\  
esc32g & 6 & 6 & 8 & \textbf{26.67} & 121.21 & 300 & 2000 & 48.71 \\  
esc32h & 438 & 426 & 456 & \textbf{6.80} & 26.68 & 9800 & 12000 & 1483.00 \\  
kra30a & 88900 & 86838 & 96230 & \textbf{10.26} & 14.54 & 6500 & 8500 & 784.84 \\  
kra30b & 91420 & 87858 & 101640 & \textbf{14.55} & 28.52 & 3600 & 12900 & 428.83 \\  
kra32 & 88700 & 85776 & 93950 & \textbf{9.10} & 34.43 & 3000 & 9100 & 460.33 \\  
nug21 & 2438 & 2382 & 2682 & \textbf{11.85} & 17.12 & 6000 & 19300 & 150.78 \\  
nug22 & 3596 & 3530 & 3678 & \textbf{4.11} & 16.79 & 6800 & 12100 & 210.12 \\  
nug24 & 3488 & 3402 & 3818 & \textbf{11.52} & 17.78 & 3700 & 11800 & 160.69 \\  
nug25 & 3744 & 3626 & 4024 & \textbf{10.40} & 19.06 & 7700 & 15900 & 395.78 \\  
nug27 & 5234 & 5130 & 5502 & \textbf{7.00} & 11.64 & 7000 & 12700 & 508.69 \\  
nug28 & 5166 & 5026 & 5674 & \textbf{12.11} & 17.14 & 5300 & 12300 & 455.90 \\  
nug30 & 6124 & 5950 & 6610 & \textbf{10.51} & 15.76 & 6900 & 12900 & 799.43 \\  
ste36a & 9526 & 9260 & 9980 & \textbf{7.48} & 39.68 & 14600 & 26700 & 4445.23 \\  
ste36b & 15852 & 15668 & 16058 & \textbf{2.46} & 84.83 & 40000 & 38500 & 11195.79 \\  
ste36c & 8239110 & 8134838 & 8387978 & \textbf{3.06} & 37.61 & 16800 & 40000 & 4036.27 \\  
tai25a & 1167256 & 1096658 & 1279534 & \textbf{15.39} & 20.55 & 1700 & 2300 & 71.73 \\  
tai30a & 1818146 & 1706872 & 1987862 & \textbf{15.21} & 15.21 & 3300 & 3700 & 319.41 \\  
tai35a* & 2422002 & 2216648 & 2598992 & \textbf{15.88} & 20.94 & 1800 & 3300 & 379.75 \\  
tai40a* & 3139370 & 2843314 & 3461270 & \textbf{19.60} & 22.87 & 2500 & 4700 & 1016.85 \\  
tho30 & 149936 & 143576 & 166336 & \textbf{14.69} & 23.62 & 5000 & 17900 & 582.65 \\  
tho40* & 240516 & 226522 & 258158 & \textbf{13.05} & 21.71 & 6200 & 21200 & 2323.48 \\ 
		\hline   
	\end{tabular}
\end{table}

Columns \textbf{rel.gap} and \textbf{rel.gap$^\ADMMp$} in \Cref{table:QAPLIB_II} show that \rPRSM produces competitive relative gaps compared to \ADMMp.
In particular, 27 out of 29 instances are solved with relative gaps just 
as good as the ones obtained by \ADMM and these instances are marked with 
boldface in \Cref{table:QAPLIB_II}.
We have found provably optimal solutions for instances \emph{chr22a} and \emph{chr25a}.
We also observe from columns \textbf{iter} and \textbf{iter$^\ADMMp$} in 
\Cref{table:QAPLIB_II} that \rPRSM gives reduction in number of iterations 
in many instances;
24 out of 29 instances use fewer number of iterations using \rPRSM 
compared to \ADMMp.

For \rPRSM alone we observe that most of the instances show good bounds 
with reasonable amount of time. 
\rPRSM produces strong lower bounds on these instances, mostly 
within $5$ percent of the optimum.

\subsubsection{Large Size}

\Cref{table:QAPLIB_III} contains results on 9 QAPLIB instances with sizes $n \in \{41,\ldots, 64\}$.
We again make similar observations made in \Cref{sec:QAPLIB-Small}.
\begin{table}[h!]
	\scriptsize
	\centering
	\caption{QAPLIB Instances of Large Size}
	\label{table:QAPLIB_III}
	\begin{tabular}{|l|cccccccc|}
		\hline 
		\rule{-3pt}{8pt}
		\rule{-3pt}{8pt}   
	    \textbf{problem}&\textbf{opt}& \textbf{lbd} & \textbf{ubd} & 
		\textbf{rel.gap} & \textbf{rel.gap$^{\ADMM}$} & \textbf{iter} & 
		\textbf{iter$^{\ADMM}$} & \textbf{time(sec)}  \\
		\hline
esc64a & 116 & 98 & 222 & \textbf{77.26} & 81.68 & 500 & 1400 & 6595.62 \\  
sko42* & 15812 & 15336 & 16394 & \textbf{6.67} & 17.61 & 5800 & 18200 & 4249.87 \\  
sko49* & 23386 & 22654 & 24268 & \textbf{6.88} & 17.41 & 7900 & 17300 & 14234.86 \\  
sko56* & 34458 & 33390 & 36638 & \textbf{9.28} & 15.13 & 5100 & 20600 & 20533.41 \\  
sko64* & 48498 & 47022 & 50684 & \textbf{7.50} & 15.37 & 6500 & 20900 & 66648.80 \\  
tai50a* & 4938796 & 4390982 & 5421576 & \textbf{21.01} & 25.79 & 2300 & 5400 & 4580.58 \\  
tai60a* & 7205962 & 6326350 & 7920830 & \textbf{22.38} & 25.60 & 3300 & 7400 & 23471.83 \\  
tai64c & 1855928 & 1811348 & 1887500 & \textbf{4.12} & 36.50 & 1200 & 2800 & 11054.54 \\  
wil50* & 48816 & 48126 & 50712 & \textbf{5.23} & 8.89 & 4700 & 15300 & 6133.16 \\  
		\hline 
	\end{tabular}
\end{table}
We observe that \rPRSM outputs better relative gaps than \ADMM on all 
these instances and this is due to the random perturbation approach 
presented in \Cref{sec:upper_bound}.
We also obtain reduction on the number of iterations. It indicates that 
our strategies taken on $R$ and $Z$ updates in \rPRSM help the iterates 
converges faster than \ADMMp.

\subsection{Comparisons to Other Methods}
\label{sec:experment2}
In this section we make comparisons with results from
two recent papers that engage 
\QAP lower and upper bounds via relaxation.\footnote{
	For more comparisons, see e.g.,~\cite[Table 4.1,~Table 
	4.2]{OliveiraWolkXu:15} to view a complete list of lower bounds using 
	bundle method presented in \cite{rendlsotirov:06}.}

\paragraph{Comparison to C-SDP(\!\!\!\cite{MR3772051})}
Here we compare our numerical result with the results presented by 
Ferreira 
et al.~\cite{MR3772051}.
Briefly, Ferreira et al. \cite{MR3772051} propose a semidefinite 
relaxation based algorithm C-SDP.
The algorithm applies to relatively sparse data and hence their results
are presented for chr and esc families in QAPLIB.
Figure \ref{fig:FKScomparison} below illustrates the relative gaps
arising from \rPRSM and C-SDP.
The numerics used in Figure \ref{fig:FKScomparison} can be found in 
\cite[Table 3-4]{MR3772051}.
\begin{figure}[h!]
	\centering   
	\includegraphics[width=15.2cm, height=3.8cm]{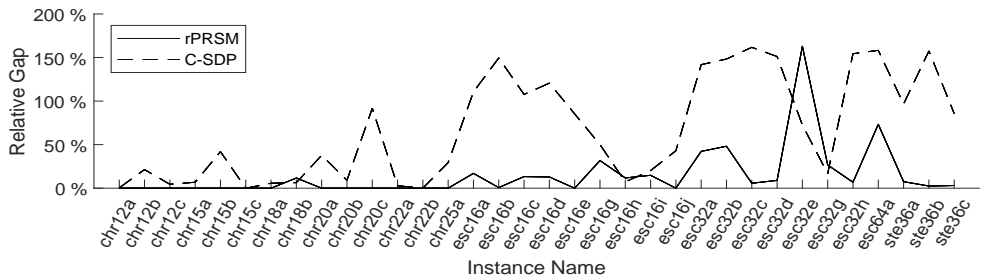}
	\vspace{-12pt}
	\caption{Relative Gap for \rPRSM and C-SDP}
	\label{fig:FKScomparison}
\end{figure}
The horizontal axis indicates the instance name on QAPLIB whereas the 
vertical axis indicates the relative gap\footnote{We selected the best 
result given in \cite[Table3, Table 4]{MR3772051} for different 
parameters. We point out that \cite{MR3772051} used a different formula 
for the gap computation. In this paper, we recomputed the relative gaps 
using \eqref{def:rel_gap} for a proper comparison.
	\cite{MR3772051} used similar approach for upper bounds as in our 
	paper, that is, the projection onto permutation matrices using 
	\cite{MR0054920,Birk:46}.}.
Figure \ref{fig:FKScomparison} illustrates that \rPRSM yields much 
stronger relative gaps than C-SDP.

\paragraph{Comparison to F2-RLT2-DA(\!\!\!\cite{MR4037827})}

Date and Nagi \cite{MR4037827} propose F2-RLT2-DA, a linearization 
technique-based parallel algorithm (GPU-based) for obtaining  lower bounds 
via Lagrangian relaxation.

\begin{figure}[h]
	\centering  
	\subfigure[Lower Bound Gap]{\includegraphics[width=7.5cm, 
	height=3.6cm]{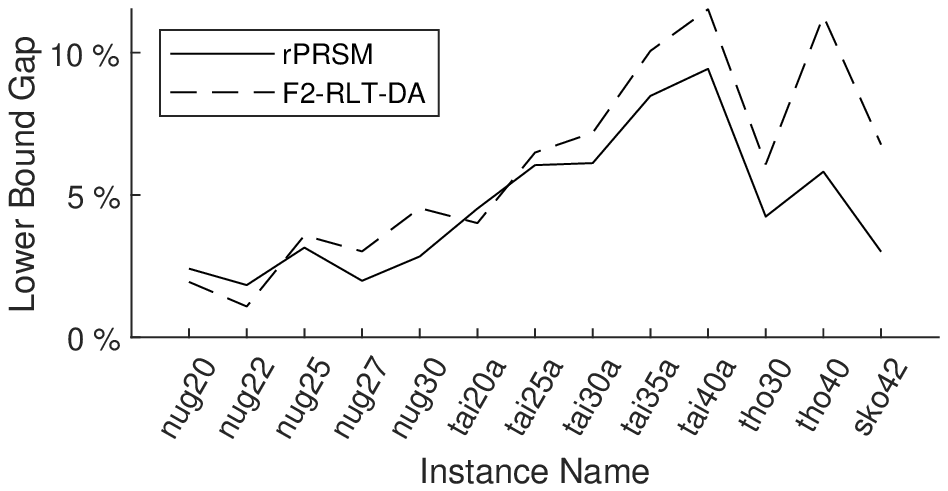}} 
	\subfigure[Running Time]{\includegraphics[width=7.5cm, 
	height=3.6cm]{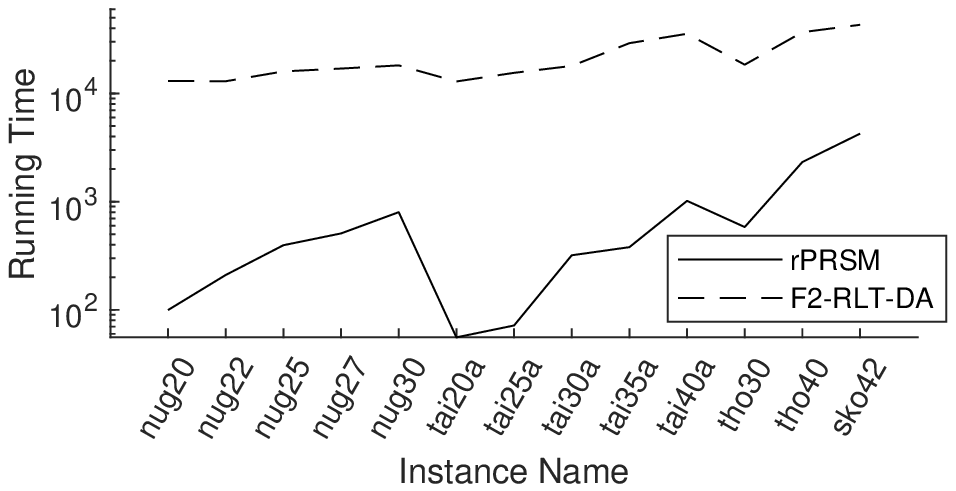}}  
	\vspace{-12pt}
	\caption{Numerical Comparison for \rPRSM and F2-RLT2-DA}
	\label{fig:DNcomparison}
\end{figure}

Figure \ref{fig:DNcomparison}(a) illustrates the comparisons on lower 
bound gap \footnote{We compute the lower bound gap by $100*(p^*-l)/p^*\%$, 
where $p^*$ is the best known feasible value to \QAP and $l$ is the lower 
bound.}
using \rPRSM and F2-RLT2-DA. 
It shows that both \rPRSM and F2-RLT2-DA output competitive lower bounds 
to the best known feasible values for \QAPp.
Figure \ref{fig:DNcomparison}(b) illustrates the comparisons on the 
running time\footnote{The running time for F2-RLT2-DA is 
obtained by using the average time per iteration presented in \cite{MR4037827} multiplied by 2000 as F2-RLT2-DA 
runs the algorithm for 2000 iterations. The running time for \rPRSM is 
drawn from \Cref{table:QAPLIB_I,table:QAPLIB_II,table:QAPLIB_III}.}
in seconds using \rPRSM and F2-RLT2-DA.
We observe that the running time of F2-RLT2-DA is much longer than the 
running time of \rPRSMp; F2-RLT2-DA requires at least 10 times longer than 
\rPRSMp. Furthermore, from Figure \ref{fig:DNcomparison} we observe that 
even 
though the two 
methods give similar lower bounds to \QAPp, \rPRSM is less time-consuming 
even considering the differences in the hardware\footnote{F2-RLT2-DA was 
coded in C++ and CUDA C programming languages and deployed on the Blue 
Waters Supercomputing facility at the University
	of Illinois at Urbana-Champaign. Each processing element consists
	of an AMD Interlagos model 6276 CPU with eight
	cores, 2.3 GHz clock speed, and 32 GB memory connected
	to an NVIDIA GK110 ``Kepler'' K20X GPU with
	2,688 processor cores and 6 GB memory.}.

\section{Conclusion}
\label{sect:concl}

In this paper
we re-examin the strength of using a splitting method for solving 
the facially reduced \SDP relaxation for the \QAP with nonnegativity
constraints added, that is, 
the splitting of constraints into two subproblems that are challenging
to solve when used together.
In addition, we provide a straightforward derivation of facial
reduction and the gangster constraints via a direct lifting.

We use a strengthened model and algorithm, i.e.,~we
incorporate a redundant trace constraint to the model that 
is not redundant in the subproblem from the splitting.   
We also exploit the set of dual optimal multipliers and provide 
customized dual updates in the algorithm that allow for a proof of
optimality for the original \QAPp. This allowed for a
new strategy for strengthening the upper and lower bounds.

\printindex
\addcontentsline{toc}{section}{Index}
\label{ind:index}

\bibliographystyle{plain}
\bibliography{.master,.psd,.qap}
\addcontentsline{toc}{section}{Bibliography}

\end{document}